\title[Degenerate direction]{Degenerate characteristic directions for maps tangent to the Identity}
\author{Liz Vivas}
\date{May 25th, 2011}
\newtheorem{theorem}{Theorem}
\newtheorem{lemma}{Lemma}
\newtheorem{proposition}{Proposition}
\theoremstyle{definition}
\theoremstyle{remark}
\newtheorem{remark}{Remark}
\newcommand{\NN}{\mathbb{N}}
\newcommand{\CC}{\mathbb{C}}
\newcommand{\PP}{\mathbb{P}}
\newcommand{\Ree}{\textrm{Re}}
\newcommand{\Arg}{\textrm{Arg}}
\newcommand{\Id}{\textrm{Id}}
\newcommand{\Ind}{\textrm{Ind}}
\newcommand{\tilf}{\tilde F}
\begin{document}

\bibliographystyle{plain}

\begin{abstract}
Let $F:(\CC^2,O) \to (\CC^2,O)$ be a germ tangent to the identity. Assume $F$ has a characteristic direction $[v]$. In \cite{Hak} Hakim gives conditions to guarantee the existence of an attracting basin to the origin along $[v]$, in the case of $[v]$ a non-degenerate characteristic direction. In this paper we give conditions to guarantee the existence of basins along $[v]$ in the case of $[v]$ a degenerate characteristic direction.

\end{abstract}

\maketitle

\section{Introduction}

In this paper we study the local dynamics of maps tangent to the identity. That is, we consider of germs of holomorphic self-maps $F: \CC^n \to \CC^n$ such that $F(O) = O$, where $O \in \CC^n$ is the origin and $dF(O) = Id$. 
When $n = 1$ the dynamics is described by the celebrated Leau-Fatou flower theorem. In the case of $n > 1$ recent progress has been made to understand the dynamics and significant results have been obtained (see, e.g., \cite{Ab-Tov},\cite{Ab2}, \cite{Br} \cite{Hak},\cite{We},\cite{Mo},\cite{Vi}). However, we are still far from understanding the complete picture.

We investigate conditions ensuring the existence of open attracting domains to the fixed point for maps tangent to the identity in dimension $2$. Open domains are related to characteristic directions (see later for definitions). Characteristic directions are in turn classified in three different types (Fuchsian, irregular and apparent; \cite{Ab-Tov}). Hakim has given necessary conditions to guarantee the existence of basins for characteristic directions that are non-degenerate with non-vanishing index (a particular type of Fuchsian direction). We generalize her result for all Fuchsian directions:

\begin{theorem}
Let $F$ be a germ of a holomorphic self-map of $(\CC^2,O)$ tangent to the identity. Assume $[v]$ is  a degenerate characteristic direction that is Fuchsian. If the real part of the inverse of the index $I(\tilde{F},\PP^1,[v])$ belongs to the region $R$ (see figure \ref{Region R}), then there exists an open basin $V$ attracted to the origin along $[v]$. If $F$ is an automorphism of $\CC^2$ then $\displaystyle{\Omega = \bigcup_{i\geq 0}F^{-i}(V)}$ is biholomorphic to $\CC^2$.
\end{theorem}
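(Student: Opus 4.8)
The plan is to blow up the origin, reducing the problem to a germ fixing the exceptional divisor pointwise, then to build the basin by Fatou-coordinate estimates adapted to the degeneracy, and finally to obtain the global biholomorphism by propagating the local model along the backward orbit. Concretely, I would blow up $O$ and work in the chart where $[v]$ is the point $(x,u)=(0,0)$, the exceptional divisor $E\cong\PP^1$ being $\{x=0\}$ and $u$ a coordinate along $E$. Writing $F=\Id+F_\nu+\cdots$ with $F_\nu=(P_\nu,Q_\nu)$ homogeneous of degree $\nu$, the lift $\tilf$ takes the form
\[
\tilf(x,u)=\bigl(x+x^{\nu}p(u)+\cdots,\ u+x^{\nu-1}g(u)+\cdots\bigr),
\]
with $p(u)=P_\nu(1,u)$ and $g(u)=Q_\nu(1,u)-u\,P_\nu(1,u)$, so that $\tilf$ fixes $E$ pointwise. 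Degeneracy of $[v]$ forces $p(0)=0$, and the Fuchsian hypothesis means the meromorphic form $\tfrac{p(u)}{g(u)}\,du$ has a simple pole at $u=0$ whose residue is the index $I=I(\tilf,\PP^1,[v])$. Integrating the associated homogeneous field $\dot x=x^{\nu}p(u),\ \dot u=x^{\nu-1}g(u)$ gives $\tfrac{dx}{x}=\tfrac{p(u)}{g(u)}\,du=\bigl(\tfrac{I}{u}+\mathrm{hol}\bigr)du$, hence along invariant curves $x\sim c\,u^{I}$, i.e. $u\sim c'\,x^{1/I}$. This relation replaces Hakim's ``transverse coordinate contracts'': a forward orbit with $x\to0$ approaches $[v]$ (that is $u\to0$) exactly when $\Ree(1/I)>0$, the crudest constraint cutting out $R$.

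Next I would introduce coordinates adapted to $u\sim x^{1/I}$. On a sector in $x$ on which a branch of $x^{1/I}$ is single valued, set $w=u\,x^{-1/I}$ (an approximate first integral) together with a Fatou-type variable $\zeta$ for the one-dimensional parabolic dynamics $x$ inherits along the curve, whose effective order is governed by $\nu$ and $I$ since $\dot x\sim x^{\nu}p(u)\sim x^{\nu+m/I}$ with $m=\mathrm{ord}_0 p$. In these variables the goal is to show $\tilf$ is a small perturbation of $(\zeta,w)\mapsto(\zeta+1,\,\rho\,w)$: a translation in $\zeta$ and a contraction in $w$. The candidate basin $V$ is then the pullback of a product of a right half-plane (a petal) in $\zeta$ with a disk in $w$, intersected with the sector on which the branch is defined.

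The technical heart, and the main obstacle, is proving that $V$ is forward invariant and that $\tilf^{\,n}\to(0,0)$ on it with $u_n/x_n\to0$ (convergence along $[v]$), uniformly enough to keep $V$ open. Unlike the non-degenerate case the transverse variable does not contract on its own: its control is coupled to the parabolic variable through $u\sim x^{1/I}$, and the leading parabolic coefficient $p$ itself vanishes at the direction. One must therefore (i) fix the sector and branch so that $x^{1/I}$ is single valued and small, (ii) establish simultaneous estimates showing the half-plane--times--disk region maps into itself, which is precisely where the inequalities on $\Ree(1/I)$ defining $R$ appear, and (iii) deduce convergence of the iterates with $w_n$ bounded. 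Constructing $\zeta$ and verifying invariance under this coupled, fractionally weighted dynamics is the delicate step; the rest is bookkeeping once the correct region $R$ and branch are in place.

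Finally, for the automorphism statement, note $F(V)\subseteq V$ by construction, so $\Omega=\bigcup_{i\ge0}F^{-i}(V)$ is an increasing union, the global basin attracted to $O$ along $[v]$. The Fatou coordinate above gives a holomorphic injection $\Phi\colon V\to\CC^2$ conjugating $F|_V$ to the model translation $T$. I would extend it by setting $\widetilde\Phi=T^{-i}\circ\Phi\circ F^{\,i}$ on $F^{-i}(V)$; since $F$ is injective these agree on overlaps, and $\widetilde\Phi$ is a well-defined injective holomorphic map with $\widetilde\Phi\circ F=T\circ\widetilde\Phi$. Its image is $\bigcup_{i\ge0}T^{-i}\bigl(\Phi(V)\bigr)$: because $\Phi(V)$ contains a half-space in the translation direction the left-translates exhaust that direction, while backward iteration expands the contracted transverse direction so as to cover it too, giving $\widetilde\Phi(\Omega)=\CC^2$, whence $\Omega\cong\CC^2$. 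Equivalently one checks that $V$ is Runge in $F^{-1}(V)$ (a biholomorphic invariant of the pair, which then propagates to every $F^{-i}(V)\subset F^{-(i+1)}(V)$) and invokes the theorem that an increasing union of Runge copies of $\CC^2$ is biholomorphic to $\CC^2$, the analogue of attracting basins of automorphisms being Fatou--Bieberbach domains. The point needing care is that the transverse factor is genuinely exhausted, for which one uses that $F$ contracts it and hence $F^{-i}$ expands it across all of $\CC$.
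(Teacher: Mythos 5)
The decisive gap is that the step you yourself label ``the technical heart'' is never carried out, and it is not bookkeeping: constructing the Fatou-type coordinate and proving invariance of a region under the coupled dynamics $u\sim x^{1/I}$ \emph{is} the theorem. The paper is organized precisely so that this analysis never has to be done for a degenerate direction. After the blow-up it makes the \emph{monomial} (not fractional-power) substitution $x = z/u^{m+1}$, which puts $\tilde F$ in the form $x_1 = x + x^{k}u^{mk+k-1}[(1-(m+1)\beta)+O(x,u)]$, $u_1 = u + x^{k-1}u^{mk+k}[\beta+O(x,u)]$ with $\beta = 1/\Ind(\tilde F,\PP^1,[v])$. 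Maps of this shape are handled by the paper's Lemma 1: introducing the auxiliary variable $v = x^{a}u^{b}$ (with $a=k-1$, $b=mk+k-1$) lifts the map to a germ of $(\CC^3,O)$ for which $(1,0,0)$ is a \emph{non-degenerate} characteristic direction with the right index conditions; Hakim's theorem applied in $\CC^3$ gives a basin conjugate to a translation, and the two-dimensional basin is recovered by intersecting with the hypersurface $v = x^{a}u^{b}$ and projecting. All hard estimates are thereby delegated to the known non-degenerate theory; your route re-opens exactly the analysis this reduction is designed to avoid, and you do not supply it.

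Moreover, your heuristic produces the wrong region. You say the constraint cutting out $R$ is in essence $\Ree(1/I)>0$, i.e.\ $\Ree(I)>0$, the open right half-plane; but $R$ is the half-plane $\{\Ree(\zeta)>-m/(k-1)\}$ minus a closed disk of radius $\frac{1}{2}\left(m+1+\frac{m}{k-1}\right)$ meeting the real axis in the segment from $-m/(k-1)$ to $m+1$. Thus $R$ contains indices of negative real part and excludes a disk of indices of positive real part, and it depends on $k$ and $m$ --- quantities that never enter your derivation. In the paper these constants fall out mechanically from the two conditions of Lemma 1, $\Ree(c/d) > -b/a$ and $\left|c/d + b/(2a)\right| > b/(2a)$, evaluated at $a=k-1$, $b=mk+k-1$, $c = 1-(m+1)\beta$, $d=\beta$. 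Your Fatou--Bieberbach argument also has a flaw: the conjugacy sends $F$ to the translation $T$, and $T^{-i}$ does \emph{not} expand the transverse coordinate, so $\bigcup_i T^{-i}(\Phi(V))$ covers $\CC^2$ only if the slices of $\Phi(V)$ in the transverse direction are already all of $\CC$; the paper secures this by a final logarithmic change of variables $(x,y)\mapsto(x,y-\ln x)$ turning the region into (sector)$\times\,\CC$, and ``backward iteration expands the transverse direction'' is not a substitute for that step. Likewise, ``an increasing union of Runge copies of $\CC^2$ is $\CC^2$'' is not an off-the-shelf theorem one can invoke, and the Runge property of $V$ in $F^{-1}(V)$ is asserted, not proved.
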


We also prove a result on the existence of basins for all irregular directions:
\begin{theorem}
Let $F$ be a germ of holomorphic self-map of $(\CC^2,O)$ tangent to the identity. Assume $[v]$ is  a degenerate characteristic direction that is irregular. Then there exists an open basin $V$ attracted to the origin along $[v]$. If $F$ is an automorphism of $\CC^2$ then $\displaystyle{\Omega = \bigcup_{i\geq 0}F^{-i}(V)}$ is biholomorphic to $\CC^2$.
\end{theorem}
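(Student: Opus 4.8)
The plan is to reduce Theorem 2 to a Hakim-style basin construction carried out in a chart of a blow-up, exploiting the fact that the irregularity of $[v]$ automatically supplies the transverse contraction that, in the Fuchsian case, has to be imposed as an arithmetic condition on the index. First I would lift $F$ through the blow-up $\pi\colon M\to\CC^2$ of the origin; since $F$ is tangent to the identity, the lift $\tilf$ is defined in a neighborhood of the exceptional divisor $E=\pi^{-1}(O)\cong\PP^1$, preserves $E$, and has the characteristic directions among its distinguished fixed points on $E$. Choosing a chart $(x,w)$ with $E=\{x=0\}$ and $[v]$ at the origin, and performing finitely many further blow-ups if necessary, I would bring $\tilf$ to a normal form in which the $x$-coordinate is parabolic of order $k$ and the transverse coordinate $w$ is perturbed by a term whose order in $x$ is strictly larger than in the Fuchsian case; this extra order is precisely the reflection that, for an irregular direction, the pole of the induced meromorphic connection along $E$ at $[v]$ has order $\geq 2$.

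Next I would build a forward-invariant domain. Along the $x$-direction the substitution $t=1/x^{k-1}$ conjugates the parabolic map to an approximate translation $t\mapsto t+c+\cdots$, producing a petal $P$ on which $x_n\to 0$ like $n^{-1/(k-1)}$. I would thicken $P$ to a horn
\begin{equation*}
V=\{(x,w): x\in P,\ |w|<\eta\,|x|^{\gamma}\}
\end{equation*}
for suitable $\gamma,\eta>0$, and check $F(V)\subset V$ by estimating both coordinates on $\partial V$. The crux, and the point where the irregular case is genuinely easier than the Fuchsian one, is to control the ratio $w_n/|x_n|^{\gamma}$ with \emph{no condition on the index}. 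In the Fuchsian case the evolution of this ratio is driven by a multiplier of the form $1-O(x_n^{k-1})$ whose sign (after summing $\sum x_n^{k-1}\sim\sum 1/n$, a borderline divergent series) depends on $\mathrm{Re}\bigl(1/I(\tilf,\PP^1,[v])\bigr)$, which is why one must force it into the region $R$. For an irregular direction the higher pole order gives the transverse coefficient an extra power of $x$, so the multiplier is $1-O(x_n^{k-1+\epsilon})$ with $\sum x_n^{k-1+\epsilon}\sim\sum n^{-1-\epsilon}<\infty$; the infinite product then converges to a nonzero limit unconditionally, giving transverse contraction for every starting point in $V$.

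Combining these estimates, every orbit in $V$ converges to $O$ tangentially to $[v]$, and pushing $V$ down by $\pi$ yields the asserted basin in $\CC^2$. For the final assertion, suppose $F\in\Aut(\CC^2)$. Forward-invariance $F(V)\subset V$ produces the increasing exhaustion $V\subset F^{-1}(V)\subset F^{-2}(V)\subset\cdots$ with $\Omega=\bigcup_{i\geq 0}F^{-i}(V)$, each member biholomorphic to $V$ via the automorphism $F^{i}$. Using the Fatou coordinate constructed in the petal to trivialize $F|_V$, one verifies that the inclusions are Runge, and the conclusion $\Omega\cong\CC^2$ then follows from the standard increasing-union criterion for Runge exhaustions, exactly as in Hakim's non-degenerate case.

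The main obstacle is concentrated in the first three steps: identifying the correct normal form for an irregular degenerate direction after the minimal number of blow-ups, and proving rigorously that the higher pole order always yields a summable ($n^{-1-\epsilon}$) transverse multiplier, so that the transverse estimate is truly unconditional. The accompanying technical burden is to control the omitted higher-order parts of $\tilf$ uniformly on the horn $V$, ensuring that these error terms neither destroy forward-invariance nor overwhelm the leading-order transverse contraction.
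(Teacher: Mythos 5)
Your construction fails at its central dynamical claim, and the failure is specific to the case the theorem addresses. Since $[v]$ is \emph{degenerate}, in the blow-up chart the map reads $z_1 = z + z^k[a_m u^m + \cdots]$, $u_1 = u + z^{k-1}[c_n u^n + O(z)]$ with $m \geq 1$, $n > m+1$. Two things go wrong with the horn $\{x \in P,\ |u| < \eta |x|^{\gamma}\}$ (your $w$ is the $u$ of this chart). First, inside the horn the base displacement is $O(|x|^{k+m\gamma})$, not $\sim c\,x^k$: the coefficient $a_m u^m$ degenerates as $u\to 0$, so $t = 1/x^{k-1}$ does \emph{not} give an approximate translation by a constant, and there is no petal of order $k$ "in $x$ alone"; note also that irregularity produces extra vanishing in the transverse variable $u$ (namely $n > m+1$), not an extra power of $x$ as you assert. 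Second, and decisively: even granting a petal with $|x_j| \sim j^{-1/(k-1)}$ (as happens in the subcase $m=0$), the transverse displacement along an orbit in the horn satisfies $|u_{j+1}-u_j| \leq C\left(|x_j|^{k-1}|u_j|^{n} + |x_j|^{k}\right) \leq C'\left(j^{-1-n\gamma/(k-1)} + j^{-k/(k-1)}\right)$, which is \emph{summable}. This is exactly your "convergent infinite product", but its meaning is the opposite of what you claim: $u_j$ converges to a limit $u_\infty$ which is nonzero for generic starting points, so either the orbit exits the horn (the ratio $|u_j|/|x_j|^{\gamma}$ blows up, so property (1) fails), or it converges to the origin tangent to $[1:u_\infty] \neq [v]$, not along $[v]$. "Converges to a nonzero limit" and "transverse contraction" are mutually exclusive; the proposal conflates boundedness of the transverse coordinate with its decay.

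The actual mechanism, and the paper's route, is structurally different. For an irregular direction, points attracted along $[v]$ have transverse coordinate decaying only \emph{logarithmically} (like $(\log j)^{-1/(n-m-1)}$), i.e.\ slower than any power of the base variable, so the invariant region must have the \emph{opposite} shape from a horn: after the mixed change of variables $x = (k-1)/(z^{k-1}u^m)$, $y = (k-1)(n-m-1)/u^{n-m-1}$ (the first blends both coordinates, precisely to repair the degeneracy $m \geq 1$; irregularity $n>m+1$ is what makes the second exponent positive), the region is $\{\Ree(x)>R,\ |\Arg(x)|<\theta,\ \Ree(y)>R,\ |y|^N<|x|\}$, in which $|z|$ is dominated by a large power of $|u|$. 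In these variables the map becomes $x_1 = x+1+\eta_1$, $y_1 = y + 1/x + \eta_2$, and the decay of $u$ comes from the additive drift $\Ree(y_j) \approx \Ree(y_0) + \sum_{i<j}\Ree(1/x_i) \sim \log j \to \infty$, unconditional because $\Ree(1/x)>0$ on the sector. Handling this coupled drift system is the content of the paper's Main Lemma, for which your scheme has no substitute. Finally, your last step is also incomplete: the sets $F^{-i}(V)$ are biholomorphic to $V$, not to $\CC^2$, so no increasing-union-of-$\CC^2$ criterion applies as stated, and the Rungeness of the inclusions is asserted rather than proved; the paper instead uses the conjugacy of $F|_V$ to the translation to define $\Phi(p) = \phi(F^n(p)) - (n,0)$ on $\Omega$ and verifies $\bigcup_{n\geq 0}\left(\phi(V)-(n,0)\right) = \CC^2$ directly, because after a final logarithmic change of variables $\phi(V)$ is a sector in one variable times all of $\CC$ in the other.
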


We now introduce the definitions and explain how our results are related to what is already known.

Let $F \neq \Id$ be a germ of holomorphic self-map of $\CC^2$ fixing the origin and tangent to the identity. Let
$$F(z,w) = (z,w) + P_k(z,w) + P_{k+1}(z,w) + \ldots$$
be the homogeneous expansion of $F$ in series of homogeneous polynomials, where $\deg P_j = j$ (or $P_j \equiv 0$) and $P_k \neq 0$. We say (and fix from now on) the order $\nu(F)$ of $F$ is $k$.

A \textit{parabolic curve} for $F$ at the origin is an injective map $\phi: \bar\Delta
\to \CC^2$, where $\Delta = \{z \in\CC ; |z-1| <1\} $ satisfying the following properties:
\begin{itemize}
\item[(i)] $\phi$ is holomorphic in $\Delta$, continuous on $\bar\Delta$ and $\phi(0) = O$; 
\item[(ii)] $\phi(\Delta)$ is invariant under $F$, and $F^n|\phi(\Delta) \to O$ as $n \to \infty$ uniformly on $\Delta$.
\end{itemize}
Furthermore, if $[\phi(z)] \to [v] \in \PP^1$ as $z \to 0$, where $[\cdot]$ denotes the canonical projection of $\CC^2 \backslash {O}$ onto $\PP^1$, we say that $\phi$ is tangent to $[v]$ at the origin.

We say $[v] = [v_1 : v_2] \in \PP^1$ is a \textit{characteristic direction} for $F$ if there is $\lambda \in \CC$ such that 
$P_k(v_1,v_2) = \lambda(v_1,v_2)$. If $\lambda \neq 0$, we say that $[v]$ is \textit{nondegenerate}; 
otherwise, it is \textit{degenerate}. 

It is easy to see that we either have infinitely many characteristic directions or $k+1$ characteristic directions, if counted with multiplicities. In the former case we say the origin is \textit{dicritical}; this case has been studied by Brochero-Martinez (see \cite{Bro}).

Characteristic directions arise naturally in the study of maps tangent to the identity due to the following fact: if there exist parabolic curves tangent to a direction $[v]$ then this direction is necessarily characteristic (\cite{Hak}). 

Hakim (and \'Ecalle with his method of resurgence theory) proved the converse for nondegenerate characteristic directions:

\begin{theorem} (\cite{Ec}; \cite{Hak}). Let $F$ be a germ of holomorphic self-map of $\CC^2$ fixing the origin and tangent to the identity. Then for every nondegenerate characteristic direction $[v]$ of $F$ there are $k-1$ parabolic curves tangent to $[v]$ at the origin.
\end{theorem}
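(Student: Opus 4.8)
The plan is to reduce the two-dimensional statement to a one-variable problem along the characteristic direction $[v]$ and then apply a Leau-Fatou-type flower theorem to produce the parabolic curves. First I would change coordinates so that the characteristic direction $[v]$ becomes $[1:0]$; concretely, choose linear coordinates $(z,w)$ in which $[v] = [1:0]$, so that $P_k(1,0) = \lambda(1,0)$ with $\lambda \neq 0$. The goal is to show that $F$ leaves invariant (after an appropriate blow-up or weighted rescaling) a region on which the dynamics looks like the standard parabolic map $t \mapsto t - t^k/(\text{const}) + \cdots$ in one dimension.

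The key technical device I would use is the standard rescaling/blow-up adapted to the order $k$. Passing to coordinates $u = w/z$ (the projective chart near $[1:0]$) and keeping $z$ as the ``radial'' variable, one computes the induced action of $F$ on $(z,u)$. Because $[1:0]$ is nondegenerate ($\lambda \neq 0$), the $z$-component of $F$ in these coordinates has the form $z \mapsto z(1 + \lambda z^{k-1} + \cdots)$, which along the invariant-curve-to-be behaves like a one-dimensional parabolic germ of multiplicity $k-1$ at the origin in the $z$-variable. Meanwhile the $u$-component should contract toward $u = 0$, provided one can control it; nondegeneracy is precisely what makes $u = 0$ an attracting fixed direction for the transversal dynamics rather than a degenerate one. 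The plan is to make this rigorous by constructing $F$-invariant ``horn''-shaped domains in $(z,u)$-space on which $z$ follows parabolic petal dynamics and $u$ stays controlled, then to solve the functional equation for the invariant curves by a fixed-point argument.

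Concretely I would carry out the following steps in order. First, write $F$ in the chart $(z,u)$ and separate the leading terms, identifying the parabolic one-dimensional germ in $z$ and the transverse linear-in-$u$ part governed by the direction being nondegenerate. Second, apply a further change of variable $\zeta = z^{-(k-1)}$ (the standard Fatou coordinate substitution) to conjugate the $z$-dynamics to an approximate translation $\zeta \mapsto \zeta + 1 + o(1)$, which carries a $z$-petal to a right half-plane. Third, on this half-plane, set up the graph transform for $u$ as a function of $\zeta$: look for an invariant section $u = u(\zeta)$ satisfying the functional equation coming from the $u$-component of $F$, and solve it by showing the associated operator is a contraction on a suitable space of bounded holomorphic functions on the half-plane. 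Fourth, since the map has multiplicity $k$, there are $k-1$ attracting petals for the one-dimensional $z$-dynamics (the $k-1$ directions in which $\Re(\lambda \zeta)$ grows), and each gives rise to one invariant curve; pulling back via $\zeta = z^{-(k-1)}$ and $u = w/z$ produces a parabolic curve $\phi$ tangent to $[v]$, and $F^n \to O$ uniformly on it follows from the translation-by-one behavior in Fatou coordinates.

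I expect the main obstacle to be the third step: controlling the transverse variable $u$ and proving that the graph transform is genuinely contracting. The difficulty is that the $u$-equation has the schematic form $u \circ F = A(\zeta) u + (\text{higher order in } u) + (\text{forcing term})$, and one must verify that the effective multiplier $A(\zeta)$ decays (or at least stays bounded below $1$ in the appropriate weighted norm) along the petal, uniformly as $\zeta \to \infty$ in the half-plane. This is exactly where nondegeneracy enters quantitatively, and getting the estimates uniform up to the boundary of the petal—so that the resulting $\phi$ extends continuously to $\bar\Delta$ with $\phi(0) = O$—is the delicate point. The remaining bookkeeping (counting the $k-1$ petals, verifying injectivity and tangency of $\phi$) is routine once the invariant section is in hand.
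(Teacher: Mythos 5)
Before anything else: the paper does not prove this statement. It is Theorem 3, quoted purely as background and attributed to \'Ecalle \cite{Ec} and Hakim \cite{Hak}, so your proposal can only be measured against Hakim's original argument. Your overall architecture does match the skeleton of that argument: pass to the blow-up chart $u = w/z$ (exactly the computation the paper carries out in Section 2, yielding $\tilde F_1 = z + z^k[p_k(1,u)+O(z)]$, $\tilde F_2 = u + z^{k-1}[r(u)+O(z)]$ with $r(u) = q_k(1,u)-up_k(1,u)$), identify a one-dimensional parabolic germ of multiplicity $k$ in $z$, pass to a Fatou coordinate where the dynamics is $\zeta \mapsto \zeta+1+o(1)$, and obtain each of the $k-1$ curves as an invariant section $u = u(\zeta)$ solving a functional equation.

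There is, however, a genuine gap at your third step, and it is conceptual rather than technical. You assert that nondegeneracy ``is precisely what makes $u=0$ an attracting fixed direction for the transversal dynamics,'' and you plan to close the graph transform by showing the effective multiplier $A(\zeta)$ ``stays bounded below $1$.'' That is false in general: it conflates the hypothesis of this theorem with the hypothesis of Hakim's basin theorem (Theorem 4 of the paper). Writing $\lambda = p_k(1,0) \neq 0$, the transverse multiplier along a petal orbit is $1 + r'(0)z_n^{k-1} + \cdots$, and since $\lambda z_n^{k-1} \approx -1/((k-1)n)$ in a petal, the orbit products behave like $n^{-\Ree(i_H)/(k-1)}$, where $i_H = r'(0)/\lambda$ is exactly the Hakim index. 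Nondegeneracy ($\lambda \neq 0$) imposes no condition on $\Ree(i_H)$: when $\Ree(i_H) < 0$ the curve you are trying to build is transversally \emph{repelling}, there is no attracting open basin, and yet the $k-1$ parabolic curves still exist --- that is the entire content of the theorem beyond Theorem 4. So any contraction premised on $|A(\zeta)| \le 1$ fails precisely in the cases that make this theorem stronger than the basin theorem. What is missing is Hakim's preliminary normalization: finitely many polynomial changes of variables of the form $u \mapsto u + \alpha z^j$, which push the forcing term of the $u$-equation (the part independent of $u$) to arbitrarily high order in $z$; each such change is obstructed only at a resonance $i_H = j$, which must be discussed separately but can only occur when $\Ree(i_H) > 0$, i.e.\ when transverse attraction holds anyway. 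After this normalization the functional equation for the invariant section is solved by explicit summation along orbits in a weighted space that tolerates the polynomial growth $n^{|\Ree(i_H)|/(k-1)}$ of the transverse products, with no sign condition on the index. Without this step, your fixed-point argument does not go through.
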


In order to give a condition on the existence of basins for a non-degenerate characteristic direction, Hakim defines an index (see \cite{Hak}) as follows:

Let $[v]=[1,u_o]$ be a nondegenerate characteristic direction. Write $P_k=(p_k,q_k)$ then we have $u_op_k(1,u_o) = q_k(1,u_o)$ and $p_k(1,u_o) \neq 0$. Define $r(u) = q_k(1,u) - up_k(1,u)$, we clearly have $r(u_o) = 0$. The Hakim index is defined to be
$$
i_H(F,[v]) = \frac{r'(u_o)}{p_k(1,u_o)}.
$$
With this definition Hakim proves:

\begin{theorem}(\cite{Hak})
Let $F$ be a germ of holomorphic self-map of $\CC^2$ fixing the origin and tangent to the identity. Let $[v]$ be a nondegenerate characteristic direction. Assume that 
$\Ree(i_H(F,[v])) > 0$. Then there exists an open domain, in which every point is attracted to the origin along a trajectory tangent to $[v]$.
\end{theorem}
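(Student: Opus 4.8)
The plan is to reduce the two–dimensional problem to a one–dimensional parabolic germ together with a controlled ``transverse'' equation, and then to build an explicit forward–invariant domain on which both the radial and the transverse variables tend to zero. First I would normalize coordinates by a linear change so that the characteristic direction becomes $[v]=[1:0]$, i.e. $u_0=0$; write $a:=p_k(1,0)\neq 0$ (nondegeneracy) and recall $r(0)=0$, so that $i_H(F,[v])=r'(0)/a$. Then I would pass to the blow-up chart $w=zu$, in which the induced map $\tilde F:(z,u)\mapsto(z_1,u_1)$ takes the form
\begin{align*}
z_1 &= z + a\, z^k\bigl(1 + O(z,u)\bigr),\\
u_1 &= u + r'(0)\,u\, z^{k-1} + O\bigl(z^{k-1}u^2,\ z^k\bigr),
\end{align*}
where the transverse equation comes from $u_1=w_1/z_1$ together with the expansion $r(u)=r'(0)u+O(u^2)$. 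This separates the dynamics into a $z$-direction governing the rate of convergence to the origin and a $u$-direction measuring the distance to the characteristic direction.

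Second, I would analyze the scalar germ $z\mapsto z+az^k+\cdots$. By the one–dimensional Leau--Fatou theory, after the change of variable $\zeta=1/z^{k-1}$ this germ is conjugate to a translation $\zeta\mapsto \zeta-(k-1)a+\cdots$, producing $k-1$ attracting petals on which $z_n\to 0$ with the precise asymptotics $z_n^{k-1}\sim \dfrac{-1}{(k-1)a\,n}$. I would fix one such petal $P$ and take as candidate basin a region $D=\{(z,u):z\in P,\ |u|<\delta\}$ for a suitably small $\delta$ and a suitably narrow, truncated petal.

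Third --- and this is the crux --- I would show that $D$ is forward invariant and that $u_n\to 0$ on it. Feeding the asymptotics of $z_n$ into the transverse equation gives
\[
\log u_{n+1}-\log u_n \;\approx\; r'(0)\,z_n^{k-1}\;\approx\;\frac{-\,i_H(F,[v])}{(k-1)\,n},
\]
so that $\log|u_n|\approx \log|u_0|-\frac{\Ree\bigl(i_H(F,[v])\bigr)}{k-1}\,\log n$, whence $|u_n|\sim |u_0|\,n^{-\Ree(i_H)/(k-1)}$. The hypothesis $\Ree(i_H)>0$ is exactly what forces $|u_n|\to 0$; this is where the index condition enters. Once $z_n\to 0$ and $u_n\to 0$ are established, every point of the image of $D$ under $(z,u)\mapsto(z,zu)$ is attracted to the origin, and since $u_n=w_n/z_n\to 0$ the trajectory is tangent to $[v]$, yielding the desired open attracting domain.

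The main obstacle is the rigorous control of the error terms hidden in the ``$\approx$'' signs: one must promote the heuristic leading–order asymptotics to uniform estimates on $D$. Concretely, the coupling $O(z,u)$ in the $z$-equation must be shown not to spoil the estimate $z_n^{k-1}\sim -1/((k-1)a\,n)$, the quadratic term $O(u^2)$ must be dominated so that the linearization $r(u)\approx r'(0)u$ stays valid along the whole orbit, and the $u$-independent forcing $O(z^k)$ must be seen to be harmless. This is handled by a bootstrap/induction argument: one chooses the petal thin and $\delta$ small enough that $D$ is forward invariant and that the accumulated corrections in $\sum_n z_n^{k-1}$ converge (the tail beyond the harmonic leading term is summable, of order $\sum 1/n^2$, and $\sum_n z_n^{k}\sim\sum n^{-k/(k-1)}$ converges as well), so that the product $\prod_n\bigl(1+r'(0)z_n^{k-1}+\cdots\bigr)$ is governed by its leading factor. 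Verifying invariance and summability simultaneously is the delicate point; the remainder is bookkeeping.
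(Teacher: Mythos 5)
First, a point of comparison that matters: the paper never proves this statement. It is quoted as Hakim's theorem, and when it reappears in Section 4 as Proposition 1 (Case (a.1)) the entire proof is the citation to \cite{Hak}; everywhere else the paper treats it as a black box, deriving its new results by \emph{reduction} to it (Lemma 1 lifts a degenerate germ to $\CC^3$ via $u=z^aw^b$ so that Hakim's nondegenerate theorem applies there, and the Main Lemma proceeds by conjugating to a translation). Your proposal is therefore not parallel to anything in the paper; it is, instead, essentially a reconstruction of Hakim's original argument, and its skeleton is sound: in the blow-up chart the map has the triangular form you wrote, the petal asymptotics $z_n^{k-1}\sim -1/((k-1)a\,n)$ hold, and the identity $\log u_{n+1}-\log u_n\approx r'(0)z_n^{k-1}\approx -i_H(F,[v])/((k-1)n)$, giving $|u_n|\approx |u_0|\,n^{-\Ree(i_H)/(k-1)}$, is precisely the mechanism by which $\Ree(i_H)>0$ produces a basin. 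Two of the gaps you flag deserve to be named precisely, because they are the real content of the proof you are compressing. (i) For a \emph{full} petal $P$, the region $P\times\{|u|<\delta\}$ is generally not forward invariant when $\arg(i_H)\neq 0$: writing $\zeta=-1/((k-1)a z^{k-1})$, one has $r'(0)z^{k-1}=-i_H/((k-1)\zeta)$, whose real part is positive whenever $|\arg(i_H)-\arg\zeta|>\pi/2$, which happens near the edge of the half-plane $\{\Ree\,\zeta>C\}$; so your ``suitably narrow, truncated petal'' must be a sector $|\arg\zeta|<\pi/2-|\arg(i_H)|-\epsilon$, and one must re-check that this narrower sector is still invariant under $\zeta\mapsto\zeta+1+O\left(\delta+|\zeta|^{-1/(k-1)}\right)$ (it is, for $\delta$ small). (ii) Summability of $\sum_n|z_n|^k$ by itself does not dispose of the inhomogeneous term $O(z^k)$: one needs the variation-of-constants bound $\sum_{m<n}|z_m|^k\prod_{j=m+1}^{n-1}\left|1+r'(0)z_j^{k-1}\right|\to 0$, which holds because the product contributes a factor comparable to $(m/n)^{\Ree(i_H)/(k-1)}$. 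Both repairs are standard, so I assess your proposal as a correct, if compressed, direct proof — a genuinely different route from the paper, buying self-containedness at the cost of redoing the orbit estimates that the paper outsources to \cite{Hak}.
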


Returning to the question of existence of parabolic curves, in the case of degenerate characteristic direction, analogous results to Theorem 3 have been proven by Abate, Tovena, Molino.

\begin{theorem}(\cite{Ab1})
Let $F$ be a germ of holomorphic self-map of $\CC^2$ tangent to the identity and such that the origin is an isolated fixed point. Then there exist (at least) $k - 1$ parabolic curves for $F$ at the origin.
\end{theorem}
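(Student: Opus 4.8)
The plan is to reduce the degenerate case to the known existence result for non-degenerate directions by a sequence of blow-ups. Note first that if $F$ happens to possess a \emph{non-degenerate} characteristic direction, then the \'Ecalle–Hakim theorem stated above already furnishes $k-1$ parabolic curves and there is nothing to prove; so I would assume from the outset that every characteristic direction of $F$ is degenerate. I then blow up the origin. The map $F$ lifts to a germ $\tilde{F}$ of a self-map of the blow-up manifold $M$ that fixes the exceptional divisor $E \cong \PP^1$ pointwise, and the characteristic directions of $F$ correspond to finitely many \emph{singular points} of $\tilde{F}$ along $E$ (the hypothesis that $O$ is isolated rules out the dicritical case and keeps this set finite). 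At each singular point $p$ one attaches an index $I(\tilde{F},E,p)$, read off from the normal form of $\tilde{F}$ transverse to $E$ — the same quantity that appears in the statement of Theorem 1.

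The key ingredient is an index theorem of Camacho–Sad type: summing $I(\tilde{F},E,p)$ over all singular points $p \in E$ gives the self-intersection number of $E$, which is $-1$ after the first blow-up and becomes strictly more negative after each further blow-up. Because this sum is negative, the indices cannot all be non-negative rationals; hence there is always a singular point $p_0$ whose index lies outside $\QQ^{\geq 0}$. To exploit this I would invoke reduction of singularities (Seidenberg's theorem for the singular foliation canonically attached to $\tilde{F}$), which after finitely many blow-ups arranges that every singular point is simple, while the index theorem continues to force a simple singular point with index outside $\QQ^{\geq 0}$ to survive.

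At such a simple singular point the transverse normal form is non-degenerate and satisfies the real-part hypothesis, so Hakim's criterion (equivalently, the \'Ecalle–Hakim existence theorem applied in the chart centered at $p_0$) produces a parabolic curve through $p_0$ in the blown-up manifold. Blowing down, and using that the composition of the blow-down maps is a biholomorphism away from the exceptional locus while the curve meets that locus only at the image of $O$, this curve descends to a parabolic curve for $F$ at the origin. To reach the full count, I would track how the order $\nu$ and the number of parabolic curves transform under each blow-up, combining the $k-1$ curves supplied by the \'Ecalle–Hakim theorem at each good non-degenerate direction with the index bookkeeping, to obtain at least $k-1$ curves for $F$.

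The main obstacle is the blow-up machinery rather than the final application of Hakim's theorem: one must establish the index theorem together with the correct transformation law for $I(\tilde{F},E,p)$ under blow-up, and one must prove that the resolution process terminates while a ``good'' simple singularity is guaranteed to remain. Controlling the combinatorics of the iterated blow-ups — confirming that negativity of the total index is preserved and that a singular point with index outside $\QQ^{\geq 0}$ cannot be destroyed at every stage — is where the real work lies; the sharp count $k-1$, as opposed to merely producing one curve, also requires careful multiplicity tracking through this process.
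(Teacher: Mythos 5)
This statement is background in the paper, quoted from \cite{Ab1}; the paper offers no proof of it beyond the remark that Abate's argument proceeds via finitely many blow-ups and a residual index. So your proposal can only be measured against Abate's published proof, whose architecture you do reproduce correctly in outline: blow up, prove a Camacho--Sad-type index theorem forcing the indices along the exceptional divisor to sum to $-1$ (hence some singular point has index outside $\QQ^{\geq 0}$), run a reduction-of-singularities and Cano-style combinatorial argument to make such a point survive in good position, and then invoke a local existence result there.

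The genuine gap is at the step you treat as routine. You assert that at a simple singular point $p_0$ with index outside $\QQ^{\geq 0}$ ``the transverse normal form is non-degenerate and satisfies the real-part hypothesis, so Hakim's criterion \ldots produces a parabolic curve.'' That is false in general: after blow-up, $\tilde F$ fixes the exceptional divisor pointwise, and a singular point with non-rational index is typically still a \emph{degenerate} characteristic direction of the corresponding germ tangent to the identity (in the language of this paper, it may be Fuchsian with $m>0$, or irregular), so neither Hakim's basin criterion nor the \'Ecalle--Hakim curve theorem applies to it directly. Producing parabolic curves at such a singularity is exactly the new dynamical theorem Abate must prove --- the technical heart of \cite{Ab1}, carried out through a long sequence of changes of variables --- not a corollary of Hakim's work; indeed, the failure of Hakim's theorem in degenerate situations is the very problem the present paper is about. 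You also conflate Hakim's two results: the real-part condition on the index governs existence of open attracting \emph{basins}, while the existence of parabolic \emph{curves} at a non-degenerate direction requires no such hypothesis. Secondary gaps: reduction of singularities must be established for the map itself (or one must show that resolving the associated foliation resolves the map with matching indices), and the sharp count of $k-1$ curves needs the multiplicity bookkeeping you only gesture at. As a plan your sketch points at the right literature, but the step on which everything hinges is precisely where the theorem lives.
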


In order to prove this result, Abate modifies the geometry of the ambient space via a finite number of blow-ups and also defines a residual index $\Ind(\tilde{F},S,p) \in \CC$, where $\tilde{F}$ is a holomorphic self-map of a complex 2-manifold $M$ which is the identity on a 1-dimensional submanifold $S$, and $p \in S$. 

In particular, Abate proves the following for those characteristic directions whose residual index is not a non-negative rational number. Later, Molino generalized this results for maps whose residual index does not vanish, under an extra assumption ($F$ regular along $[v]$).

\begin{theorem}(\cite{Ab1},\cite{Mo}) Let $F$ be a germ of holomorphic self-map of $(\CC^2,O)$ tangent to the identity and such that the origin is an isolated fixed point. Let $[v] \in \PP^1$ be a characteristic direction of $F$ and assume $F$ is regular along $[v]$ with $\Ind(\tilde{F},\PP,[v]) \neq 0$ (where $\tilde{F}$ is the blow-up of $F$ and $\PP$ is the exceptional divisor). Then there exist parabolic curves for $F$ tangent to $[v]$ at the origin.
\end{theorem}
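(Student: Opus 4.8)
The plan is to lift the problem to the blow-up of the origin, where the characteristic direction becomes a fixed point of the induced map, and then to reduce the construction of a parabolic curve to a one–dimensional Leau--Fatou argument in the direction transverse to the exceptional divisor. Concretely, I would first blow up $O$; since the origin is an isolated fixed point of $F$, the direction $[v]$ is non-dicritical, so in the chart where $[v]=[1:u_0]$ the lift $\tilde F$ fixes the exceptional divisor $\PP=\{z=0\}$ pointwise near the point $p=(0,u_0)$. A direct computation using the homogeneity of $P_k$ shows that in these coordinates
$$\tilde F(z,u) = \bigl(z + z^{k}\,p_k(1,u)+O(z^{k+1}),\; u + z^{k-1}\,r(u)+O(z^{k})\bigr),$$
where $r(u)=q_k(1,u)-u\,p_k(1,u)$ vanishes at $u_0$ because $[v]$ is characteristic. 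Thus the transverse coordinate $z$ evolves, to leading order at $p$, as a one–variable parabolic germ, while $u$ drifts along $\PP$ at the slower rate governed by $z^{k-1}r(u)$.

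Next I would use the two hypotheses to pin down a usable normal form. The assumption that $F$ is regular along $[v]$ guarantees that $p$ is a non-degenerate (simple) singular point of the holomorphic foliation associated with $\tilde F$ on the blow-up, so that after a holomorphic change of coordinates the along-divisor and transverse directions are essentially decoupled at leading order. The residual index $\Ind(\tilde F,\PP,[v])$ is the Camacho--Sad index of the invariant curve $\PP$ at $p$, read off from the leading coefficients of $p_k(1,\cdot)$ and $r$ at $u_0$; the hypothesis $\Ind(\tilde F,\PP,[v])\neq 0$ ensures that the coefficient genuinely controlling the transverse dynamics does not vanish, so that the effective transverse germ has the nondegenerate parabolic form $z\mapsto z-c\,z^{m+1}+\cdots$ with $c\neq 0$ for some $m\geq 1$ — even in the degenerate case where the naive leading coefficient $p_k(1,u_0)$ drops out. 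By the Leau--Fatou flower theorem this germ possesses attracting petals on which $z^{n}\to 0$.

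Finally I would promote one such petal into a genuine $\tilde F$-invariant curve through a fixed-point argument. Over an attracting petal in the $z$-plane I would seek an invariant graph $u=\psi(z)$ with $\psi(z)\to u_0$ as $z\to 0$, writing the invariance condition as a functional equation and solving it by contraction on a suitable Banach space of holomorphic functions with prescribed decay; the transverse contraction supplies the needed smallness, while the nonvanishing index controls the drift of $u$ so that orbits converge to $p$ rather than escaping along $\PP$. Pushing this invariant curve forward by the blow-down then yields a parabolic curve for $F$ tangent to $[v]$. The main obstacle is exactly this last step: closing the estimates for the coupled $(z,u)$-dynamics so that the along-divisor drift stays subordinate to the transverse attraction. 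It is precisely the regularity of $F$ along $[v]$ together with $\Ind(\tilde F,\PP,[v])\neq 0$ that make this coupling controllable, which is why neither hypothesis can be dropped without a substantially different argument.
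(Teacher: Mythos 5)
Your proposal has a genuine gap at its central step. You claim that regularity together with $\Ind(\tilde F,\PP,[v])\neq 0$ forces ``the effective transverse germ'' at $p=(0,u_0)$ to have the nondegenerate parabolic form $z\mapsto z-cz^{m+1}+\cdots$, after which you run Leau--Fatou in $z$ and build an invariant graph $u=\psi(z)$ over a petal by contraction. Neither hypothesis delivers this. For a degenerate direction the transverse coefficient $p_k(1,u)$ vanishes at $u_0$ to order $m\geq 1$, the line $\{u=u_0\}$ is not $\tilde F$-invariant, and the index is just the residue of $p_k(1,u)/r(u)$ at $u_0$; no assumption on that residue can restore a nonvanishing leading coefficient, so there is no one-variable germ in $z$ to which Leau--Fatou applies and hence no petal on which your contraction could even start. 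Your reading of regularity is also inverted: in the notation of Section 4, regularity means $\rho\neq 0$, which is precisely a \emph{coupling} term $\rho z^k$ in the $u$-component, not a decoupling; moreover the associated foliation singularity at $p$ is then nilpotent (after dividing by $z^{k-1}$ its linear part has both eigenvalues zero), not simple, contrary to what you assert. Finally, the object you try to construct does not exist in the class you work in: in the regular Fuchsian case the parabolic curve satisfies $u^{m+1}\approx \alpha z$ for a suitable constant $\alpha$, i.e.\ it is tangent to the exceptional divisor and is an $(m+1)$-valued ``graph'' over $z$, so a fixed-point argument in a Banach space of single-valued functions $\psi$ with $\psi(z)\to u_0$ cannot produce it. What you describe is exactly Hakim's argument for nondegenerate directions ($m=0$), and it works only there --- but that case is already Theorem 3 and requires no index hypothesis; the entire content of the present theorem is the degenerate case, where this scheme breaks.

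Two further points for orientation. First, the paper does not prove this statement: it is quoted as background from \cite{Ab1} and \cite{Mo}, so there is no internal proof to compare with. Second, the proofs in those references contain precisely the idea your sketch is missing: one does not work at the original point $p$, but performs further blow-ups and (possibly ramified) changes of variables --- e.g.\ $(t,u)=(z/u^m,u)$, as used in Section 4 and recorded in the remark at the end of Section 5 --- and uses the residual index, which obeys an index theorem in the style of Camacho--Sad and whose nonvanishing keeps the reduction under control, to arrive at an auxiliary germ possessing a \emph{nondegenerate} characteristic direction (Molino's $(1,\alpha)$, or $(1-(m+1)\beta,(m+1)\rho)$ in the notation of Section 4). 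Hakim's Theorem 3 is then applied to that auxiliary germ and the resulting curve is pushed back down to a parabolic curve for $F$ tangent to $[v]$. In short, the hypotheses of the theorem are used to make a reduction of singularities terminate at a nondegenerate direction; they do not, and cannot, decouple or linearize the dynamics at $p$ itself, which is what your argument needs.
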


Examples of basins along degenerate characteristic directions have been also shown recently (see \cite{Ab2},\cite{Vi}). Also, in a recent paper, Abate and Tovena \cite{Ab-Tov} studied the dynamics of the time $1$-map of homogeneous holomorphic vector fields. In their paper, they differentiate between characteristic directions:  \textit{Fuchsian}, similar than non degenerate characteristic directions with non-vanishing index);  \textit{apparent}, degenerate directions that have, for time $1$-maps of homogeneous vector fields, no attracting dynamics along them; and the rest, which they call \textit{irregular}. 

Our Theorems $1$ and $2$ are therefore analogues of Theorem $4$ for degenerate characteristic directions. 

The organization of the paper is as follows. In the next section we explain how to compute the index defined by Abate, and we explain the relationship between the index defined by Hakim and the index defined by Abate. We also explain how to classify the characteristic directions in apparent, Fuchsian and irregular directions. In Section 3 we prove the main lemmas used for the proof of the theorems. In Section 4 we prove Theorem $1$ and in Section 5 we prove Theorem $2$. Finally, in the last section we proved that the basins are all biholomorphic to $\CC^2$ when $F$ is an automorphism.

\textit{Acknowledgements}: The author would like to thank Marco Abate, Eric Bedford and Han Peters for enlightening discussions about this paper. Also, deep thanks to L\'aszl\'o Lempert who commented on an earlier draft.

\section{Background: Abate Index and Hakim Index}

We have already explained in the last section how to compute the Hakim Index. Let us write this index more explicitly, in terms of the expansion of $P_k=(p_k,q_k)$.

Let $F$ be our map tangent to the identity, and let $[v] = [1:0]$ be a characteristic direction (we can do this by conjugating $F$ with a rotation). Then $q_k(1,0) = 0$.

In terms of the expansion of $P_k$: $p_k(z,w) = \sum_{i=0}^k a_iz^{k-i}w^i = a_0z^k + a_{1}z^{k-1}w + \ldots + a_{k-1}zw^{k-1} + a_kw^k$ and $q_k(z,w) = \sum_{i=0}^k b_iz^{k-i}w^i = b_0z^k + b_{1}z^{k-1}w + \ldots + b_{k-1}zw^{k-1} + b_kw^k$, we have $q_k(1,0) = b_0 = 0$.

By definition $[v]$ is a non-degenerate characteristic direction if $p_k(1,0) = a_0 \neq 0$ and degenerate if $p_k(1,0) = a_0 = 0$. 

Assume $[v]$ is a non-degenerate characteristic direction. Then we compute the index defined by Hakim in terms of the coefficients $a_i, b_i$. A simple computation shows:
$$
i_{H}(F,[v]) = \frac{b_1-a_0}{a_0}.
$$
%

%
%$$r(u) = q_k(1,u) - up_k(1,u) = \sum_{i=0}^k b_iu^i - u\sum_{i=0}^k a_iu^i = \sum_{i=1}^{k} b_iu^{i} - \sum_{i=0}^k a_iu^{i+1}.$$ 
%Then $r(0) = 0$. 
%Computing:
%$$r'(u) = \sum_{i=1}^{k}ib_iu^{i-1} - \sum_{i=0}^k (i+1)a_iu^{i},$$
%we will have $r'(0) = b_{1} - a_{0}$. So, the index defined by Hakim is:

Now, let us explain how to compute the index defined by Abate, which is defined not only for non-degenerate characteristic directions, but for a larger class of characteristic directions.

As was mentioned above, the index defined by Abate is actually defined for $\tilde{F}$, which is the blow-up of $F$ at the origin. In the blow up of $F$, our characteristic direction $[1:0]$ becomes a point in the exceptional divisor. 

In the chart $(z,u) = (z,w/z)$ for the blow-up of $F$ we have:

\begin{align*}
\tilde{F}(z,u) = \left(z + p_k(z,zu) + p_{k+1}(z,zu) + \ldots, \frac{zu + q_k(z,zu) + q_{k+1}(z,zu) + \ldots}{z + p_k(z,zu) + p_{k+1}(z,zu) + \ldots}\right)\\
= \left(z + z^kp_k(1,u) + z^{k+1}p_{k+1}(1,u) + \ldots, \frac{zu + z^kq_k(1,u) + z^{k+1}q_{k+1}(1,u) + \ldots}{z + z^kp_k(1,u) + z^{k+1}p_{k+1}(1,u) + \ldots}\right)\\
= \left(z + z^kp_k(1,u) + z^{k+1}p_{k+1}(1,u) + \ldots,u + z^{k-1}[q_k(1,u)-up_k(1,u)] + O(z^{k})\right).
\end{align*}
We obtain for $\tilde{F} = (\tilde F_1, \tilde F_2)$:
\begin{align}
\tilde F_1(z,u) &= z + z^k\left[p_k(1,u) + zp_{k+1}(1,u) + O(z^2)\right] \label{blowF}\\
\tilde F_2(z,u) &= u + z^{k-1}\left[r(u) + O(z)\right]  ,
\end{align}
where $r(u) = q_k(1,u) - up_k(1,u)$.
We have $\tilde F (0,u) = (0,u)$, which means that $\tilde F$ fixes the exceptional divisor. Also, the fact that $[1:0]$ is a characteristic direction for $F$ means that $r(0) =0$ for $\tilf$. The origin is dicritical if $r(u) \equiv 0$. Assume from now on the origin is not dicritical, i.e. $r(u) \not\equiv 0$. We will make a remark at the end of the section about the dicritical case.

We now explain the distinction between characteristic directions as defined by Abate and Tovena on \cite{Ab-Tov}.
If $m$ is the lowest order degree term of $p_k(1,u)$ and $n$ is the lowest order degree term of $r(u)$, then we have:
\begin{itemize}
\item[a.] If $1+m = n$, we say $[1:0]$ is a \textit{Fuchsian} characteristic direction. 
\item[b.] If $1+m < n$, we say $[1:0]$ is an \textit{irregular} characteristic direction.
\item[c.] If $1+m > n$ or $m = \infty$, we say $[1:0]$ is an \textit{apparent} characteristic direction. 
\end{itemize}

Abate defines the index of $F$ at the characteristic direction $v = [1:0]$ (which is the point $u=0$) as follows
$$
\Ind(\tilf,\PP^1,[v]) = Res_0(k(u)) 
$$
where 
$$
k(u) = \lim_{z\to 0}\frac{\tilf_1(z,u)-z}{z(\tilf_2(z,u)-u)}.
$$

In terms of $p_k$ and $q_k$ then:
\begin{align*}
k(u) = \lim_{z\to 0}\frac{z^kp_k(1,u) + O(z^{k+1})}{z(z^{k-1}r(u) +O(z^k))} = \frac{p_k(1,u)}{r(u)}.
\end{align*}

Therefore
\begin{align*}
\Ind(\tilf,\PP^1,[v]) = Res_0\frac{p_k(1,u)}{r(u)},
\end{align*}

In term of the coefficients of $p_k$ and $q_k$:
$$
p_k(1,u) = a_0 + a_1u + \ldots + a_ku^k
$$
and
$$
r(u) = (b_1-a_0)u + (b_2-a_1)u^2 + \ldots + (b_k-a_{k-1})u^k-a_{k}u^{k+1}
$$

We define
\begin{align*}
m:=& \min\{h \in \NN, a_h \neq 0\}, \\
n:=& \min\{j \in \NN, b_{j} - a_{j-1} \neq 0\}. 
\end{align*}

In the case of $p_k \equiv 0$, then we say $m = \infty$. Since $r(u)$ does not vanish identically, we have $n < \infty$.

Back to the definition of the index:
\begin{align*}
\Ind(\tilf,\PP^1,[v]) = Res_0\frac{p_k(1,u)}{r(u)} = Res_0\frac{a_mu^m + O(u^{m+1})}{c_nu^n + O(u^{n})} ,
\end{align*}

For each of the three cases above we compute the index:

\begin{itemize}
\item[(a)] \emph{$[1:0]$ is a Fuchsian direction i.e. $m+1=n$.}
\begin{itemize}
\item[(a.1)] $m=0, n=1$. 
If $m=0$, i.e. $a_0 \neq 0$, so we have $(1,0)$ is a non-degenerate characteristic direction (since $p_k(1,0) = a_0$). 
\begin{align*}
\Ind(\tilf,\PP^1,[v]) = \frac{a_0}{b_1-a_0} = \frac{1}{i_H(F,[v])}.
\end{align*}
We obtain that the index defined by Abate is the inverse of the index defined by Hakim, if the latest is not $0$. 
\\
\item[(a.2)]$m>0, n = m+1$.
\begin{align*}
\Ind(\tilf,\PP^1,[v]) = Res_0\frac{a_1u +a_2u^2+\ldots}{(b_1-a_0)u + (b_2-a_1)u^2 + \ldots}.
\end{align*}
\begin{align*}
\Ind(\tilf,\PP^1,[v]) = \frac{a_{n-1}}{b_{n}-a_{n-1}} = \frac{a_{m}}{b_{n}-a_{m}}.
\end{align*}
\end{itemize}
\item[(b)] \emph{$[1:0]$ is an irregular direction i.e. $m+1<n$.}
\begin{itemize}
\item[(b.1)] $m=0, n>1$.
We have $(1,0)$ is a non-degenerate characteristic direction and, plugging in the definition of index by Hakim we obtain $i_H(F,[v]) = 0$. However, the Abate index is not necessarily $0$.
\begin{align*}
\Ind(\tilf,\PP^1,[v]) &= Res_0\frac{a_0+a_1u +a_2u^2+\ldots}{(b_2-a_1)u^2 + \ldots}\\
&= \frac{a_{n-1}}{b_n-a_{n-1}}
\end{align*} 
\item[(b.2)] $m>0, n>m+1$.
In this case $(1,0)$ is degenerate and we have the same than above:
\begin{align*}
\Ind(\tilf,\PP^1,[v]) = \frac{a_{n-1}}{b_{n}-a_{n-1}}.
\end{align*}
\end{itemize}
\item[(c)] \emph{$[1:0]$ is an apparent direction i.e. $n<m+1$.}
\begin{itemize}
\item[(b.1)] $m<\infty$. 
In this case we have that $u=0$ is not a pole of $k(u)$ and we have:
\begin{align*}
\Ind(\tilf,\PP^1,[v]) = 0.
\end{align*}

\item[(b.2)] $m= \infty$.
Also $(1,0)$ is a degenerate characteristic direction. In the definition of index we obtain:
\begin{align*}
\Ind(\tilf,\PP^1,[v]) = Res_0 0 = 0.
\end{align*}

\end{itemize}

\end{itemize}

Let us summarize what we have in a table:
\begin{center}
\begin{tabular}{c|c|c|c|c|c|c} \cline{2-7}& m $\backslash$  n & 1 & 2 & 3 & 4 & \ldots   \\ \cline{2-7} Non-degenerate $\rightarrow$ &0 & Fuchsian  & Irregular & Irregular & Irregular&Irregular   \\ \cline{2-7} & 1 & Apparent & Fuchsian &Irregular &Irregular&Irregular   \\ \cline{2-7} & 2 &Apparent &Apparent  &Fuchsian  & Irregular& Irregular\\ \cline{2-7} & 3 & Apparent & Apparent& Apparent&Fuchsian  &Irregular \\ \cline{2-7} & \ldots & & & & &
\end{tabular}
\end{center}

With this notation we have the following theorems:

\begin{theorem}\label{Fuchsian}
Let $F$ be a germ of holomorphic self-map of $\CC^2$ tangent to the identity. Assume $[v]$ is a degenerate characteristic direction that is Fuchsian. If $\Ind(\tilf, \PP^1, [v]) \in R$, where
$$R = \left\{\zeta \in \CC, \Re(\zeta) > -\frac{m}{k-1}, \left|\zeta-\frac{m+1-m/(k-1)}{2}\right| > \frac{m+1+m/(k-1)}{2}\right\} \subset \CC$$
then there exists an open basin attracted to the origin along $[v]$.
\end{theorem}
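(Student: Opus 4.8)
The plan is to pass to the blow-up chart and reduce the two-dimensional dynamics to a one-dimensional parabolic germ coupled with two transverse multipliers, following the strategy Hakim used in the non-degenerate case. After normalizing $[v]=[1:0]$ and using the chart $(z,u)=(z,w/z)$ of \eqref{blowF}, the map takes the form $\tilf_1(z,u)=z+a_m z^k u^m+\cdots$ and $\tilf_2(z,u)=u+c\,z^{k-1}u^{m+1}+\cdots$, where $a_m\neq 0$, $c=b_{m+1}-a_m\neq 0$ (it is precisely the Fuchsian hypothesis $n=m+1$ that forces the exponent $m+1$ in the second component), and $\Ind(\tilf,\PP^1,[v])=a_m/c=:\zeta$; the dots denote terms of higher weighted order in $(z,u)$.

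The first step is to isolate the parabolic direction. I would set $s=z^{k-1}u^m$; a short computation gives $z'/z=1+a_m s+\cdots$ and $u'/u=1+c\,s+\cdots$, whence $s'=s+\mu s^2+\cdots$ with $\mu=(k-1)a_m+mc$. This is a one-dimensional parabolic germ, and since $\mu=0$ only when $\zeta=-m/(k-1)$, a value excluded by the condition $\Re\zeta>-m/(k-1)$ defining $R$, we have $\mu\neq0$ throughout $R$. Passing to $S=-1/(\mu s)$ linearizes it to $S\mapsto S+1+\cdots$, so that on a suitable attracting petal $S\to\infty$ with $\Re S\to+\infty$ and $s\to 0$.

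The second step records the two transverse rates. Besides $u$, the relevant monomial is $\rho=z\,u^{-(m+1)}$, for which $\rho'/\rho=(z'/z)(u'/u)^{-(m+1)}=1+\nu s+\cdots$ with $\nu=a_m-(m+1)c$. Along the petal one finds $u_n\sim \mathrm{const}\cdot S_n^{-c/\mu}$ and $\rho_n\sim \mathrm{const}\cdot S_n^{-\nu/\mu}$, so $u_n\to 0$ exactly when $\Re(c/\mu)>0$ and $\rho_n\to 0$ exactly when $\Re(\nu/\mu)>0$. Writing $c/\mu=1/((k-1)\zeta+m)$ and $\nu/\mu=(\zeta-(m+1))/((k-1)\zeta+m)$, the first inequality is the half-plane $\Re\zeta>-m/(k-1)$, while the second is the exterior of a disc; its boundary circle passes through the zero $\zeta=m+1$ and the pole $\zeta=-m/(k-1)$ of $\nu/\mu$, which are exactly the two real endpoints of the disc in the definition of $R$. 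Thus the two convergence conditions say precisely $\Ind(\tilf,\PP^1,[v])\in R$. Since $z=\rho\,u^{m+1}$, the convergence $z_n\to 0$ then comes for free, and the trajectory is tangent to $[1:0]$ because $u_n\to 0$.

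The technical heart, and the step I expect to be hardest, is to promote these asymptotics to a genuine open, forward-invariant domain on which $(z_n,u_n)\to(0,0)$ uniformly, with all discarded higher-order terms controlled over the entire infinite orbit. The crucial observation is that $\rho=z/u^{m+1}$ is exactly the quantity measuring the size of each neglected term relative to the leading one: for instance the $O(z^k)$ correction in $\tilf_2$, divided by the leading term $c\,z^{k-1}u^{m+1}$, is $O(\rho)$, so the smallness of $\rho$ guaranteed by $\Re(\nu/\mu)>0$ is precisely what makes the perturbation subdominant. I would therefore take the domain to be a product of narrow sectors in the transverse variables $u$ and $\rho$, chosen so that $s=\rho^{\,k-1}u^{\,k(m+1)-1}$ lands in the attracting petal, and then establish invariance and attraction by Hakim-type estimates. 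The main difficulties are: (i) $S$, $u$ and $\rho$ carry complex exponents and are multivalued, so the estimates must be carried out on a sector or universal cover with branches chosen consistently; (ii) the petal condition couples $u$ and $\rho$ through $s$, so invariance of the domain must be verified simultaneously in all variables rather than coordinatewise; and (iii) one must prove that the infinite products $\prod_n(1+c\,s_n)$ and $\prod_n(1+\nu\,s_n)$ converge uniformly, which rests on $\sum_n|s_n|^2<\infty$ despite $\sum_n|s_n|=\infty$. Undoing the blow-up then exhibits the desired open basin attracted to $O$ along $[v]$.
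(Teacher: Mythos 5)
Your reductions are correct, and they are in substance the paper's own: the three quantities you work with are exactly the ones the paper uses, only packaged differently. The paper first changes variables to $x=z/u^{m+1}$ (your $\rho$), which puts $\tilde F$ into the form of its Lemma 1 with $a=k-1$, $b=mk+k-1$, $c_{\mathrm{lem}}=\nu$, $d_{\mathrm{lem}}=c$; then, inside the proof of Lemma 1, it adjoins the monomial $x^{k-1}u^{mk+k-1}=z^{k-1}u^m$ --- your $s$ --- as a new \emph{independent} variable. This turns the problem into a germ of $(\CC^3,O)$ of order $2$, namely $s_1=s+\mu s^2+\cdots$, $\rho_1=\rho+\nu\rho s+\cdots$, $u_1=u+c\,us+\cdots$, with $(1,0,0)$ a non-degenerate characteristic direction, so that the forward-invariant open set, the control of all neglected terms along infinite orbits, and the conjugacy to a translation are imported wholesale from Hakim's theory in $\CC^p$; the two-dimensional basin is recovered by slicing with the invariant hypersurface $\{s=\rho^{k-1}u^{mk+k-1}\}$ and projecting. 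Your conditions $\Re(c/\mu)>0$ and $\Re(\nu/\mu)>0$ are exactly Lemma 1's conditions $\Re(c_{\mathrm{lem}}/d_{\mathrm{lem}})>-b/a$ and $\left|c_{\mathrm{lem}}/d_{\mathrm{lem}}+\frac{b}{2a}\right|>\frac{b}{2a}$, and your identification of them with $R$ is correct. What your orbit-level version buys is transparency: it shows $R$ is precisely the set of indices for which the two transverse cocycles $u_n$ and $\rho_n$ decay. What the paper's lift buys is that the hard analysis never has to be done by hand.

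That is also where your proposal stops short of a proof: the ``technical heart'' (invariant domain, uniform error control, your difficulties (i)--(iii)) is announced as a program rather than carried out, and in your non-autonomous, multivalued setting it is genuinely laborious --- comparable to the paper's Main Lemma in Section 3, which does this kind of estimate by hand for the irregular case. The most economical way to finish is your own monomial: promote $s$ to an independent coordinate as above, so the system becomes an order-$2$ germ of $\CC^3$ and the petal for $s$ is an honest Fatou petal rather than an orbit-dependent one. One caution if you do this: what you need from Hakim in $\CC^3$ is an open set of orbits with $s$ in a petal and $u,\rho\to0$, which holds under $\Re(c/\mu)>0$, $\Re(\nu/\mu)>0$; it is \emph{not} attraction tangent to $(1,0,0)$, whose standard sufficient condition (the directors $c/\mu-1$ and $\nu/\mu-1$ having positive real part) is strictly stronger and in fact can never hold here, since $(k-1)\Re(\nu/\mu)+(mk+k-1)\Re(c/\mu)=1$. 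So the citation must be to the attracting-domain construction, not to the tangential statement; with that reading, your argument and the paper's coincide.
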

\begin{center}
% Generated with LaTeXDraw 2.0.7
% Tue Apr 05 17:39:07 EDT 2011
% \usepackage[usenames,dvipsnames]{pstricks}
% \usepackage{epsfig}
% \usepackage{pst-grad} % For gradients
% \usepackage{pst-plot} % For axes
\begin{figure}[h]

%\scalebox{0.8} % Change this value to rescale the drawing.
{
\begin{pspicture}(0,-4.0051847)(6.366173,4.0051847)
\psframe[linewidth=0.0020,linestyle=dotted,dotsep=0.16cm,dimen=outer,fillstyle=vlines*,hatchwidth=0.04,hatchangle=0.0](6.366173,3.5399544)(1.4261726,-3.9200454)
\rput(2.0069604,0.0){\rput{0.09976467}(0.0,-0.001741221){\psaxes[linewidth=0.038,tickstyle=bottom,labels=none,ticksize=0.0358cm,showorigin=false](0,0)(-2,-4)(4,4)}}
\rput{-0.11088063}(0.0,0.005663013){\pscircle[linewidth=0.04,dimen=outer,fillstyle=solid](2.9263704,-0.10087236){1.48}}
\usefont{T1}{ptm}{m}{n}
\rput{0.09976467}(0.0,-0.0018415959){\rput(1.0574164,-0.27891734){$-\frac{m}{k-1}$}}
\usefont{T1}{ptm}{m}{n}
\rput{0.09976467}(0.0,-0.0069605396){\rput(3.857324,-0.23257938){$m+1$}}
\end{pspicture} 
}
\caption{Region $R \subset \CC$.}
\label{Region R}
\end{figure}

\end{center}

In fact we can even be more precise, depending on the regularity of $F$ (see Section $4$ for the definition of regular and Figure $2$ for the precise region we obtain).

\begin{theorem}\label{irregular}
Let $F$ be a germ of a holomorphic self-map of $\CC^2$ tangent to the identity. Assume $[v]$ is  a degenerate characteristic direction that is irregular. Then there exists an open basin attracted to the origin along $[v]$.
\end{theorem}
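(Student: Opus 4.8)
The plan is to run the analysis entirely after the blow-up, in the chart $(z,u)$, where the expansion \eqref{blowF} of $\tilf$ reads $\tilf_1(z,u) = z + z^k[a_m u^m + O(u^{m+1}) + O(z)]$ and $\tilf_2(z,u) = u + z^{k-1}[c_n u^n + O(u^{n+1}) + O(z)]$, with $a_m \neq 0$, $c_n = b_n - a_{n-1} \neq 0$, and the irregularity hypothesis $m+1 < n$. Producing an open basin attracted to the origin along $[v]=[1:0]$ reduces to finding a forward-invariant domain $V$, lying off the exceptional divisor $\{z=0\}$, on which $z \to 0$ and $u \to 0$ under iteration; since the blow-up is a biholomorphism away from $\{z=0\}$, such a $V$ descends to a genuine attracting basin in $(\CC^2,O)$ tangent to $[v]$.

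First I would straighten the parabolic part. Putting $w = z^{k-1}$ (on a sector of $z$ where this is injective) gives the order-two parabolic form $w \mapsto w + (k-1)a_m u^m w^2 + \cdots$ together with $u \mapsto u + c_n w u^n + \cdots$, and then $\omega = 1/w$ turns the first component into a near-translation $\omega \mapsto \omega - (k-1)a_m u^m + \cdots$ while $u \mapsto u + c_n\omega^{-1}u^n + \cdots$. Since $a_m \neq 0$, I can fix a sector $S_u$ in the $u$-plane of opening less than $\pi/m$ together with a rotation $e^{i\psi}$ so that $\Re(e^{i\psi}a_m u^m) \le -c\,|u|^m$ on $S_u$ for some $c>0$; on the half-plane $\{\Re(e^{i\psi}\omega) > R\}$ this forces $\Re(e^{i\psi}\omega)$ to increase by at least $c'\,|u|^m$ at each step, so $\omega \to \infty$ and $z \to 0$, as long as the orbit of $u$ stays in $S_u$.

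The decisive input of the irregular case is the behaviour of $u$, and the clean way to capture it is a Fatou-type invariant. Because $n-m-1 \ge 1$, set
\[
\Xi \;=\; u^{-(n-m-1)} - K\log\omega, \qquad K = \frac{(n-m-1)\,c_n}{(k-1)\,a_m}.
\]
A direct computation gives $\Delta\big(u^{-(n-m-1)}\big) = -(n-m-1)c_n\,\omega^{-1}u^{m} + \cdots$ and $\Delta(K\log\omega) = -(n-m-1)c_n\,\omega^{-1}u^{m} + \cdots$, so the constant $K$ is chosen precisely so that the two leading terms cancel and $\Delta\Xi$ is of strictly higher order, hence summable along the orbit. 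Consequently $\Xi$ stays bounded, so $u^{-(n-m-1)} = K\log\omega + O(1) \to \infty$; this yields $u \to 0$ unconditionally — with no hypothesis on $\Ind(\tilf,\PP^1,[v])$ — and shows $\arg u$ converges (to $-\tfrac{\arg K}{\,n-m-1\,}$), which is what keeps the orbit inside $S_u$. This is exactly the structural reason no index condition appears here: in the Fuchsian case the exponent $n-m-1$ degenerates to $0$, the invariant becomes $\log u + \tfrac{c_n}{(k-1)a_m}\log\omega$, and boundedness forces $u \sim \omega^{-c_n/((k-1)a_m)}$, whose decay requires $\Re\big(c_n/((k-1)a_m)\big)>0$ — the origin of the sign condition defining the region $R$ in Theorem~\ref{Fuchsian}.

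Concretely I would take $V$ to be the preimage, through $w = z^{k-1}$ and $\omega = 1/w$, of the region $\{\Re(e^{i\psi}\omega) > R,\ u \in S_u,\ |\Xi| < M\}$ for $R,M$ large and $S_u$ narrow, and establish forward-invariance together with $(z,u)\to(0,0)$ by a single induction along the orbit, tracking simultaneously (i) the monotone growth of $\Re(e^{i\psi}\omega)$, (ii) the bound $|\Xi| < M$, and (iii) the confinement $u \in S_u$. The main obstacle is precisely this simultaneous bookkeeping: the two variables are genuinely coupled, the driving coefficient $a_m u^m$ of the $z$-dynamics degenerates as $u \to 0$, and the crude estimate $|\Delta\arg u| \le |c_n\,\omega^{-1}u^{n-1}|$ is too lossy to keep $u$ in $S_u$ (its sum diverges logarithmically). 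One must instead exploit the exact cancellation in $\Delta\Xi$ to see that the transverse motion is asymptotically radial, and then show that the full nonlinear remainders ($O(z)$, $O(u^{m+1})$, $O(u^{n+1})$ in the normal form) are dominated uniformly on $V$, so that none of (i)--(iii) can fail before $z$ has been driven to $0$.
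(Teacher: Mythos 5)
Your reduction to the blow-up chart, the choice of coordinates $\omega=z^{-(k-1)}$, and the cancellation built into $\Xi=u^{-(n-m-1)}-K\log\omega$ are all correctly computed, but the pivotal claim --- that after the leading cancellation ``$\Delta\Xi$ is of strictly higher order, hence summable along the orbit'' --- is false, and the induction cannot close as written. The subleading coefficients $a_{m+1}$ (from $p_k(1,u)=a_mu^m+a_{m+1}u^{m+1}+\cdots$) and $c_{n+1}$ (from $r(u)=c_nu^n+c_{n+1}u^{n+1}+\cdots$) do not cancel against each other: they leave in $\Delta\Xi$ a term of size $(n-m-1)\left|\frac{c_na_{m+1}}{a_m}-c_{n+1}\right|\cdot\frac{|u|^{m+1}}{|\omega|}$, which vanishes only in the non-generic case $c_{n+1}/c_n=a_{m+1}/a_m$. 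Along the very orbits you construct, $|u_j|\asymp(\log j)^{-1/(n-m-1)}$ and $|\omega_j|\asymp j\,(\log j)^{-m/(n-m-1)}$, so this term is $\asymp \frac{1}{j(\log j)^{1/(n-m-1)}}$; since irregularity means $n-m-1\geq 1$, i.e. $1/(n-m-1)\leq 1$, its sum diverges (like $\log\log j$ when $n=m+2$, like $(\log j)^{1-1/(n-m-1)}$ otherwise), and the phases do not oscillate (both $\arg u_j$ and $\arg\omega_j$ converge), so the divergence is genuine. Hence $\Xi$ is in general unbounded, the set $\{|\Xi|<M\}$ is not forward invariant, and items (i)--(iii) of your bookkeeping cannot be maintained simultaneously. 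Note this is exactly the logarithmic loss you yourself flagged for the crude estimate on $\Delta\arg u$; it reappears one order down, inside $\Xi$.

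The gap is repairable but needs a further idea, not just more care: for instance, replace the absolute bound by the relative bound $|\Xi|<\delta\log|\omega|$ (the drift of $\Xi$ per step, $\asymp|u|^{m+1}/|\omega|$, is then dominated by the growth $\asymp\delta|u|^m/|\omega|$ of the allowed bound, and $u^{-(n-m-1)}=(K+O(\delta))\log\omega$ still forces $u\to0$ and pins $\arg u$ inside $S_u$), or push the cancellation to all orders by adding corrections $\beta_1u^{-(n-m-2)}+\cdots$ to $\Xi$, watching for resonances. The paper sidesteps the difficulty with a different choice of coordinates: $x=\frac{k-1}{z^{k-1}u^m}$, $y=\frac{(k-1)(n-m-1)}{u^{n-m-1}}$, in which the degeneracy $u^m$ is absorbed into the \emph{first} variable (unlike your $\omega$, which advances only by $\asymp|u|^m$ per step), so the map becomes a genuine near-translation $x_1=x+1+\eta_1$, $y_1=y+\frac1x+\eta_2$. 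The invariant region $V_{R,N,\theta}=\{\Ree(x)>R,\ |\Arg(x)|<\theta,\ \Ree(y)>R,\ |y|^N<|x|\}$ and the conjugation to $(x,y)\mapsto(x+1,y)$ are then supplied once and for all by the Main Lemma (Lemma~\ref{malem}), whose chain of changes of variables (integrating $1/(1+\kappa_1)$, etc.) is designed precisely to handle error terms that are \emph{not} summable --- the difficulty your proposal underestimates.
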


If $[v]$ is an apparent singularity, then as we see above, the index is always $0$ and the existence (or non-existence) of basins depends on the higher order terms of $F$ (i.e. not only on $(p_k,q_k)$ but also on $(p_j,q_j)$ for $j > k$).

We investigate this case in a subsequent paper.
%
%\begin{theorem}\label{apparent}
%Let $F$ be a germ of holomorphic self-map of $\CC^2$ tangent to the identity and such that the origin is an isolated fixed point. Assume $[v]$ is  a degenerate characteristic direction that is apparent. Then there might or not exists an open basin attracted to the origin along $[v]$ or not.
%\end{theorem}

\begin{remark}
In the case of the origin being dicritical, then we have $r(u) \equiv 0$. We also have $p_k(1,u) \not \equiv 0$. (If $p_k \equiv 0$ and the origin is dicritical, then we have $q_k \equiv 0$, but that is not possible since we are assuming $P_k \neq 0$.) Therefore we have:
\begin{align*}
k(u) = \lim_{z\to 0}\frac{z^kp_k(1,u) + O(z^{k+1})}{O(z^{k+1})} = \infty.
\end{align*}
In this case we say $\tilf$ is \textit{degenerate} along $\PP^1$. Abate also calls $F$ \textit{degenerate} along $\PP^1$ in \cite{Ab1}, and it is also called \textit{non-tangential} to $\PP^1$ in \cite{Ab-Tov}. 
\end{remark}

%\newpage

%%%%%%%%% LEMMAS %%%%%%%%%

\section{\textbf{Conjugacy to the translation}}

We first prove a lemma which is a generalization of Hakim's theorem.

\begin{lemma}
Let $f=(f_1,f_2)$ germ of $(\CC^2,O)$ of the following form:
\[ 
\left\{ \begin{array}{l} f_1(z,w) = z + z^{a+1}w^b[c + O(z,w)]  \\ f_2(z,w) = w + z^aw^{b+1}[d +O(z,w)] 
\end{array}\right. 
\]
with $c \neq 0$, $d \neq 0$ and $a+b\geq 1$; $a,b$ non-negative.
If $c/d$ is such that:
\[
\Ree(c/d) > -\frac{b}{a} \qquad \textrm{and} \qquad \left|\frac{c}{d}+\frac{b}{2a}\right| > \frac{b}{2a}
\]
then the map has a basin attracted to the origin. In this basin the map is conjugate to a translation $(x,y) \to (x+1,y)$.
\end{lemma}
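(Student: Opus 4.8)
The plan is to reduce $f$ to an approximate translation by singling out the combination $s := z^a w^b$, which will play the role of the inverse Fatou coordinate. A direct computation gives $f_1 = z\bigl(1 + cs + s\,O(z,w)\bigr)$ and $f_2 = w\bigl(1 + ds + s\,O(z,w)\bigr)$, so that $s\circ f = s + \mu s^2 + O(s^3) + s^2 O(z,w)$ with $\mu := ac + bd$. The first hypothesis $\Ree(c/d) > -b/a$ forces $c/d \neq -b/a$ and hence $\mu \neq 0$, so that $t := -1/(\mu s)$ satisfies $t\circ f = t + 1 + O(1/t) + O(z,w)$. Thus in the coordinate $t$ the map is a perturbation of the unit translation and the problem acquires the familiar Leau--Fatou shape. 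This is exactly the strategy behind Hakim's theorem; the case $b=0$ recovers it.

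The second ingredient is the (formally) invariant quantity $\eta := z^d w^{-c}$, made single valued through $\log\eta = d\log z - c\log w$ on the simply connected domain to be used. Because the first order terms cancel ($dc - cd = 0$), one finds $\log(\eta\circ f) - \log\eta = O(s^2) + s\,O(z,w)$, an increment that is summable along any orbit with $s\to 0$. Solving $z^a w^b = s$, $z^d w^{-c} = \eta$ gives $z = s^{c/\mu}\eta^{b/\mu}$, $w = s^{d/\mu}\eta^{-a/\mu}$, so along an orbit with $t_n \sim n$ (hence $s_n \sim 1/n$) one has $|z_n| \sim n^{-\Ree(c/\mu)}$ and $|w_n| \sim n^{-\Ree(d/\mu)}$. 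A short computation shows $\Ree(d/\mu) = \Ree\bigl(1/(a(c/d)+b)\bigr) > 0$ is equivalent to the first hypothesis, while $\Ree(c/\mu) = \Ree\bigl((c/d)/(a(c/d)+b)\bigr) > 0$ is equivalent to the second (the circle condition); together they force $z_n, w_n \to 0$, i.e. attraction to the origin.

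With these two functions in hand I would: (1) exhibit an open, forward invariant basin $V$, a petal-type region of the form $\{\Ree(t) > R,\ |\arg t| < \theta,\ |z|,|w| < \delta\}$, on which $t\circ f = t+1+o(1)$ drives $\Ree(t_n)\to+\infty$ while both coordinates contract, using the rate estimates above to verify invariance and convergence to $O$; (2) build the conjugacy $\Phi = (\Phi_1, \Phi_2)$ to $(x,y)\mapsto(x+1,y)$, taking $\Phi_2 := \lim_n \eta\circ f^n$ (convergent by the summable increment above, so $\Phi_2\circ f = \Phi_2$) and $\Phi_1$ a Fatou coordinate for the $t$-equation with $\Phi_1\circ f = \Phi_1 + 1$; (3) verify $\Phi$ is biholomorphic onto its image by checking that the leading Jacobian of $(t,\eta)$ in $(z,w)$ is nonvanishing for $z,w\neq 0$ and that $\Phi$ is injective on $V$ from the normal form.

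The main obstacle is step (2), the construction and convergence of $\Phi_1$. The raw increment $t\circ f - t - 1$ is only $O(1/t) + O(z,w)$, which is not summable: the $O(1/t)$ part produces the usual resonant term $\gamma/t$, to be absorbed by replacing $t$ with $t - \gamma\log t$ so that the increment becomes $O(1/t^2)$, while the $O(z,w)$ part contains the finitely many monomials $z^i w^j$ with $i\,\Ree(c/\mu) + j\,\Ree(d/\mu) \le 1$, whose contributions $\sim n^{-(i\Ree(c/\mu)+j\Ree(d/\mu))}$ fail to be summable. These must be removed first by a finite sequence of polynomial changes of coordinates in the spirit of Hakim's normalization; the one genuinely resonant monomial is $z^a w^b$ itself, which is precisely the $\gamma/t$ term already handled by the logarithm. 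Only after this normalization does $\lim_n(\Phi_1\circ f^n - n)$ converge uniformly on $V$ and define a holomorphic Fatou coordinate. Establishing this convergence, and the injectivity of $\Phi$, is the technical heart of the argument.
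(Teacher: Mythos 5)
Your first move is exactly the paper's: introduce $s=z^aw^b$ (the paper's $u$), note that the first hypothesis forces $\mu=ac+bd\neq 0$, and translate the two hypotheses into $\Ree(d/\mu)>0$ and $\Ree(c/\mu)>0$, i.e.\ into positivity of the decay exponents of $|w_n|$ and $|z_n|$. That bookkeeping is correct, and it is indeed the form of the condition that makes the lifted map attracting. But after this common start the routes diverge completely. The paper never builds a Fatou coordinate in dimension $2$: it regards $(u,z,w)$ as a germ of $(\CC^3,O)$ for which $(1,0,0)$ is a non-degenerate characteristic direction, applies Hakim's theory in $\CC^3$ --- which supplies the invariant domain, the attraction, \emph{and} the conjugation to the translation $(x,y,t)\mapsto(x,y,t+1)$ in one stroke --- and then intersects that three-dimensional basin with the invariant hypersurface $\{u=z^aw^b\}$ and projects to $(z,w)$. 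You instead rebuild everything by hand in dimension $2$. Your attraction argument (the invariant petal, the first integral $\eta=z^dw^{-c}$, the rate estimates) is sound in outline, modulo a bootstrap to decouple the rates from the boundedness of $\eta$; the genuine gap is in the conjugacy, which is part of the statement of the lemma and is what the paper later needs for the Fatou--Bieberbach conclusion.

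Concretely: you claim that the only resonant monomial is $z^aw^b$ itself, so that all other non-summable monomials in the increment of $t$ can be killed by finitely many constant-coefficient polynomial changes of variable. This is false whenever $c/d$ is real rational, which the hypotheses allow. Take $a=b=1$ and $c=d$: both hypotheses hold ($\Ree(c/d)=1>-1$ and $|c/d+1/2|=3/2>1/2$), $\mu=2c$, and the resonance condition $ic+jd=\mu$ reads $i+j=2$, so $z^2$ and $w^2$ are resonant in addition to $zw$. Along an orbit, $z^2\approx s\,\eta$ and $w^2\approx s\,\eta^{-1}$ with $\eta=z/w$ your first integral; these are terms of size $1/t$ whose coefficients depend on the orbit through $\lim_n \eta_n$. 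They are not summable; they cannot be removed by your polynomial scheme (the divisor $1-(ic+jd)/\mu$ vanishes); and they cannot be absorbed into a single correction $t\mapsto t-\gamma\log t$ with constant $\gamma$. Handling them requires corrections whose coefficients are functions of the second coordinate --- precisely the machinery of $y$-dependent integrating factors ($f(y)$, $g(u,y)$, $h(y)$, and the splitting off of pure-$y$ terms) that the paper develops in its Main Lemma for the irregular case --- or else the paper's $\CC^3$ device, which sidesteps the normalization problem entirely by quoting Hakim. Until the resonant case is handled, your construction of $\Phi_1$, and with it the conjugacy claim, is not established.
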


\begin{proof} 
This lemma is a result of Hakim in the case $b=0$. For $b>0$ we make a change of coordinates and transform our map into a germ of $(\CC^3,O)$ where we can apply Hakim's result, as follows.

We introduce $u = z^aw^b$. Then our map can be written as:
\begin{align*}
u_1 &= u + u^2(ac + bd +O(z,w))\\
z_1 &= z + zu(c + O(z,w))\\
w_1 &= w + wu(d + O(z,w))
\end{align*}
We can think of this map as a germ from $\CC^3$ to itself fixing the origin. The characteristic direction $(1,0,0)$ is not degenerate. We compute Hakim's index, which in this case is a matrix. We have:
$$
P_2(u,z,w) = ((ac+bd)u^2,czu,dwu) = (p_2(u,z,w),q_2(u,z,w))
$$
with $p_2 \in \CC$ and $q_2 \in\CC^2$.
Then in Hakim's formula we get (see \cite{Hak}):
$$
r(u_1,u_2) = q_2(1,u_1,u_2) - (u_1,u_2)p_2(1,u_1,u_2) = (cu_1,du_2) - (ac+bd)(u_1,u_2)
$$
therefore
$$
Dr(u_1,u_2)  =\left(\begin{array}{cc}(1-a)c-bd & 0 \\ 0 & -ac+(1-b)d\end{array}\right).
$$
If $\displaystyle{\Ree\left(\frac{(1-a)c-bd}{ac+bd}\right)>0}$ and  $\displaystyle{\Ree\left(\frac{(1-b)d-ac}{ac+bd}\right)>0}$ then we have a basin. We can see that this condition is equivalent to
\[
\Ree\left(\frac{c}{d}\right) > -\frac{b}{a} \qquad \textrm{and} \qquad \left|\frac{c}{d}+\frac{b}{2a}\right| > \frac{b}{2a}.
\]
Therefore we have a basis in $\CC^3$ and applying Hakim's theorem, we know that it can be conjugated to a translation $(x,y,t) \to (x,y,t+1)$. We can intersect this basis with $u = z^aw^b$ and we project in $(z,w)$ and we will get the required basin in $\CC^2$.
\end{proof}

The following lemma will be used repeatedly in the proof of Theorem 2. The proof is rather technical so we divide it into a series of steps.

\begin{lemma}[Main Lemma]\label{malem}
Let $(x_1,y_1) = G(x,y)$ be defined as:
\begin{align*}
x_1 &= x + 1 + \eta_1(x,y)\\
y_1 &= y + \frac{1}{x} + \eta_2(x,y)
\end{align*}
such that 
$$\eta_1(x,y) = O\left(\frac{1}{x^a},\frac{1}{y^b}\right)\textrm{ and }\eta_2(x,y) = O\left(\frac{y^c}{x^d},\frac{1}{xy^e}\right)\textrm{ with }a>0,b>0, c>0, d>1, e>0.
$$ 
Then there exists a domain $V$ in $\CC^2$ and an injective holomorphic map $\phi:V\to \CC^2$ such that:
\begin{itemize}
\item[(1)] $(x,y) \in V$ then $G(x,y) \in V$; and 
\item[(2)] $\phi\circ G\circ\phi^{-1}(u,v) = (u+1,v)$.
\end{itemize}
\end{lemma}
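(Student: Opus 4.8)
\emph{The plan} is to produce $V$ and $\phi=(\phi_1,\phi_2)$ so that $\phi_1$ is a Fatou coordinate for the first variable, $\phi_1\circ G=\phi_1+1$, while $\phi_2$ is a first integral, $\phi_2\circ G=\phi_2$. Once both exist and $\phi$ is holomorphic and injective, these two identities say precisely that $\phi\circ G\circ\phi^{-1}(u,v)=(u+1,v)$, which is conclusion (2); invariance of $V$ is conclusion (1). I would take $V$ to be a horn-shaped region in which $x$ lies in a sector about the positive real axis with $|x|$ large, and $|y|$ is dominated by a fixed small power of $|x|$ (equivalently, $y$ confined to a neighborhood of $\log x$); the precise shape, and the admissible exponent, are dictated by the hypotheses on $a,b,c,d,e$ and by the orbit estimates below. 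Both coordinates will be built as limits of suitably renormalized iterates, writing $G^n(x,y)=:(x_n,y_n)$.

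First I would record the \emph{pilot estimates}. Forward invariance of $V$ is the standard horn estimate for $x_1=x+1+\eta_1$, together with the bound $|y_1|\le|y|+|x|^{-1}+|\eta_2|$, whose right-hand increment is easily absorbed by the growth of the power of $|x|$ bounding $|y|$. Iterating $x_{n+1}-x_n=1+\eta_1(x_n,y_n)$ yields $x_n=n\,(1+o(1))$, and since $y_{n+1}-y_n=x_n^{-1}+\eta_2(x_n,y_n)$ with $\sum_{k<n}x_k^{-1}\sim\log n$, one gets $y_n\sim\log n\to\infty$. The hypotheses $a,b,c,e>0$ and, crucially, $d>1$ enter here to bound the error increments along the orbit: the condition $d>1$ is exactly what makes the contribution of $y^c/x^d$ to $y_{n+1}-y_n$, of size $(\log n)^c/n^d$, summable.

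Next I would construct the coordinates. The natural candidates $\phi_1=\lim_n(x_n-n)$ and $\phi_2=\lim_n\big(y_n-\log x_n\big)$ have the right shape and satisfy the desired functional equations formally: any convergent limit of the form $\lim_n\Psi(G^n(x,y))$ is automatically $G$-invariant, while $\lim_n(x_n-n)$ shifts by $+1$ under $G$. \textbf{The main obstacle is convergence.} Because $y_n$ grows only logarithmically, several error terms evaluated along the orbit are not summable: the $y_n^{-b}$ piece inside $\eta_1$ decays like $(\log n)^{-b}$, and the $(x_ny_n^{e})^{-1}$ piece inside $\eta_2$ decays like $n^{-1}(\log n)^{-e}$, so the naive telescoping series diverge. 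To repair this I would precondition $G$ by an explicit change of coordinates (for the first variable, a correction of the form $x\mapsto x-c\,x\,y^{-b}+\cdots$, $c$ constant) that pushes every non-summable term into an order where the telescoping series converges absolutely; equivalently, one subtracts from $x_n-n$ and from $y_n-\log x_n$ a \emph{universal} (starting-point independent) divergent sequence $s(n)$ with $s(n+1)-s(n)\to0$, which preserves the functional equations while cancelling the leading divergent part. Holomorphy and local uniform convergence of the resulting $\phi_1,\phi_2$ on $V$ then follow from Cauchy estimates on the derivatives of $\eta_1,\eta_2$ combined with the summable bounds.

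Finally I would verify that $\phi=(\phi_1,\phi_2)$ is injective. By construction $\phi_1=x+r_1$ and $\phi_2=y-\log x+r_2$ with $r_1,r_2$ of lower order, so $d\phi$ is a small perturbation of $\left(\begin{array}{cc}1 & 0\\ -1/x & 1\end{array}\right)$ and is invertible on $V$; hence $\phi$ is a local biholomorphism. Global injectivity follows from the conjugacy itself: if $\phi(p)=\phi(q)$ then $\phi(G^np)=\phi(G^nq)$ for all $n$, and for $n$ large the points $G^np,G^nq$ lie deep in the horn, where $\phi$ is plainly injective (being a small perturbation of the biholomorphism $(x,y)\mapsto(x,y-\log x)$ on the simply connected horn), forcing $G^np=G^nq$; since $G$ is injective on $V$ (it is a small perturbation of $(x,y)\mapsto(x+1,y+1/x)$), we conclude $p=q$. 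This produces the injective holomorphic $\phi$ with $\phi\circ G\circ\phi^{-1}(u,v)=(u+1,v)$ on the forward-invariant domain $V$, completing the proof.
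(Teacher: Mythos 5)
You have correctly isolated the crux of this lemma: along an orbit $x_n\sim n$ but $y_n\sim\log n$, so the pure-$y$ parts of the error terms are not summable and the naive telescoping limits defining a Fatou coordinate and a first integral diverge. However, both of your proposed repairs fail at exactly this point. The ``universal sequence'' version is false: writing $\mu_1(y)$ for the pure-$y$ part of $\eta_1$, the divergent part of $\sum_k \mu_1(y_k)$ is genuinely orbit-dependent. Even in the most favorable case where $y_k-\log k$ converges to some limit $w(p)$ depending on the starting point $p$, one has
$$\mu_1\bigl(\log k + w(p)\bigr) - \mu_1\bigl(\log k + w(q)\bigr) \sim -b\,\bigl(w(p)-w(q)\bigr)\,(\log k)^{-(b+1)},$$
which is again non-summable, because $\sum_k(\log k)^{-s}$ diverges for every $s>0$. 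Hence no starting-point-independent sequence $s(n)$ can renormalize all orbits simultaneously (contrast the classical case $x_1=x+1+a/x+\cdots$, where the divergent part $a\sum 1/k$ is universal up to summable errors); worse, $w(p)$ is precisely the first integral $\phi_2$ you are trying to construct, so this route is also circular. The ``preconditioning'' variant, as you state it, fails for the same reason: pushing a pure-$y$ term to higher order in $1/y$ never helps, since $1/y_n^{b'}\sim(\log n)^{-b'}$ is non-summable for every $b'>0$. Pure-$y$ terms cannot be improved in order; they must be cancelled exactly.

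That exact cancellation is the key device of the paper's proof, and it is what your sketch is missing. The paper splits off the pure-$y$ parts, $\eta_1=\mu_1(y)+\rho_1(x,y)$ and $x\eta_2=\mu_2(y)+\rho_2(x,y)$, and conjugates by $(x,y)\mapsto(xf(y),y)$, where $f$ solves the linear ODE $(1+\mu_2(y))f'(y)+(1+\mu_1(y))f(y)=1$; this integrating-factor equation is rigged so that the increment of the new first coordinate is exactly $1+O(1/u^{\epsilon})$, with no pure-$y$ remainder left over. A second step of the same kind, $\phi_5(z,y)=(z,h(y))$ with $h(y)=\int(1+\Upsilon(y))^{-1}\,dy$, removes the pure-$y$ terms from the second coordinate. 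Only after these two steps do arguments of the type you propose actually converge: the paper then takes antiderivatives in the parabolic variable, $g(u,y)=\int(1+\kappa_1(u,y))^{-1}\,du$, to improve the first-coordinate error from $O(1/u^{\epsilon})$ to $O(1/t^2)$ (this also handles the issue, unaddressed in your sketch, that $1/x_n^a$ is non-summable when $a\le 1$), then telescoping sums $t\mapsto t+\sum_{i\ge 0}\gamma(t_i,y_i)$, and finally the explicit conjugacy $(x,y)\mapsto(x,y-\ln x)$ of the model map $(x,y)\mapsto(x+1,y+1/x)$, which is the rigorous counterpart of your $\phi_2$. Your overall architecture (invariant horn, Fatou coordinate plus first integral, injectivity via the conjugacy) parallels the paper's, but without an exact-cancellation mechanism for the pure-$y$ terms the limits you write down simply do not exist, so the proposal has a genuine gap.
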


\begin{proof}
We will prove the lemma in several steps. First we find $V$, and then we will find $\phi$ as a composition of several change of variables.

Define 
\begin{align}\label{regionV}
V = V_{R,N,\theta} = \{(x,y) \in \CC^2: \Ree(x) > R, |\Arg(x)| <\theta, \Ree(y) > R, |y|^N < |x|\}
\end{align}
We prove that for a large $R, N$ and $\theta$, then $V$ is invariant under $G$.

Choose $N$ and $R$ large enough, so that $d - c/N>1$, so $|O(y^c/x^d)| < |O(x^{c/N}/x^d)| = |O(1/x^{d-c/N})|$ and $|\eta_1(x,y)| < 1/10$. Choosing $\theta << \pi/4$  we also have that $|\eta_2(x,y)| < 1/(10|x|)$ for all $(x,y) \in V$.

We prove now that if $(x,y) \in V$, then $(x_1,y_1) \in V$.

For the real part of $x_1$:
\begin{align*}
\Ree (x_1) = \Ree (x + 1) + \Ree (\eta_1) \geq \Ree x + 1 - |\eta_1| > \Ree x + \frac{9}{10} > R + \frac{9}{10} > R.
\end{align*}
We compute the argument of $x_1$:
\begin{align*}
|\tan \Arg (x_1)| \leq \frac{\Ree(x)|\tan \Arg(x)| + 1/10}{\Ree(x) + 11/10} 
\end{align*}
If $|\tan \Arg(x)|>1/11$, then we have 
\begin{align*}
|\tan \Arg (x_1)| \leq \frac{\Ree(x)|\tan \Arg(x)| + 1/10}{\Ree(x) + 11/10} < |\tan \Arg(x)|
\end{align*}
and therefore $|\Arg(x_1)| < |\Arg(x)| < \pi/4$.
If $0 \leq |\tan \Arg(x)| \leq 1/11$, then we have 
\begin{align*}
|\tan \Arg (x_1)| \leq \frac{\Ree(x)|\tan \Arg(x)| + 1/10}{\Ree(x) + 11/10} <  \frac{\Ree(x)/11 + 1/10}{\Ree(x) + 11/10} < 1/11
\end{align*}
and therefore $|\Arg(x_1)| < \pi/4$.
For the real part of $y_1$:
\begin{align*}
\Ree (y_1) = \Ree (y + \frac{1}{x}(1+x\eta_2)) = \Ree(y) + \Ree (\frac{1}{x}(1+x\eta_2)) 
\end{align*}
and
\begin{align*}
\Ree (\frac{1}{x}(1+x\eta_2)) &= \Ree\left(\frac{1}{x}\right)\Ree(1+x\eta_2) - \textrm{Im}\left(\frac{1}{x}\right)\textrm{Im}(1+x\eta_2)\\
&\geq \Ree\left(\frac{1}{x}\right)\Ree(1+x\eta_2) - \textrm{Re}\left(\frac{1}{x}\right)\textrm{Im}(1+x\eta_2) \\
&= \Ree\left(\frac{1}{x}\right) [\Ree(1+x\eta_2) - \textrm{Im}(1+x\eta_2)]\\
&= \Ree\left(\frac{1}{x}\right) [1+ \Ree(x\eta_2) - \textrm{Im}(x\eta_2)] > 8/10  \Ree\left(\frac{1}{x}\right) >0.
\end{align*}
The last point: $|y_1|^N < |x_1|$.
We have from our estimates above:
\begin{align*}
|x_1| &= |x|\left|1 + \frac{1}{x} + \frac{\eta_1(x,y)}{x}\right|\\
|y_1| &= |y|\left|1 + \frac{1}{xy} + \frac{\eta_2(x,y)}{y}\right|
\end{align*}
We use the estimates:
\begin{align*}
\left|1 + \frac{1}{x} + \frac{\eta_1(x,y)}{x}\right| \geq \frac{1}{|x|}\left[|x| + \frac{2}{5}\right] = 1 + \frac{2}{5|x|}
\end{align*}
%We use: $|x| \leq 2\Ree(x)$
and
\begin{align*}
\left|1 + \frac{1}{xy} + \frac{\eta_2(x,y)}{y}\right|^N < 1 + \frac{22N}{10|xy|}.
\end{align*}
Therefore:
\begin{align*}
|y_1|^N &= |y|^N\left|1 + \frac{1}{xy} + \frac{\eta_2(x,y)}{y}\right|^N < |y|^N\left(1 + \frac{22N}{10|xy|}\right) < |x|\left(1 + \frac{2}{5|x|}\right) = |x_1|
\end{align*}
if we choose $|y|> 22N/4$.
So, we have $V$ is invariant under the action of $G$.

Now we will prove the second part of the lemma.

We will separate some parts of the error terms and we'll deal with them first.
\begin{align*}
\eta_1(x,y) = \mu_1(y) + \rho_1(x,y)\textrm{ and } x\eta_2(x,y) = \mu_2(y) + \rho_2(x,y)
\end{align*}
where we separate the terms that contain only pure $y$ terms in both $\eta_1$ and $x\eta_2$. Therefore we have:
$$
\rho_1(x,y) = O\left(\frac{1}{x^a}\right)\textrm{ and }\rho_2(x,y) = O\left(\frac{y^c}{x^{d-1}}\right) < O\left(\frac{1}{x^{d-1-c/N}}\right)
$$
and
$$
\mu_1(y) = \left(\frac{1}{y^b}\right)\textrm{ and } \mu_2(y) = \left(\frac{1}{y^e}\right).
$$
Define $f(y)$ an analytic solution of the differential equation in the projection of $V$ to the second coordinate, for:
\begin{align}
(1+\mu_2(y))f'(y) + (1+ \mu_1(y))f(y)  = 1.
\end{align}

We see that 
\begin{align}\label{sizef}
f(y) = 1 + O(\mu_1,\mu_2) = O\left(\frac{1}{y^t}\right),
\end{align}
with $t>0$.

The first change of variables we make is the following:
\begin{align*}
(u,y) = \phi_1(x,y) = (xf(y),y) 
\end{align*}
Clearly $\phi_1$ is injective, because of \eqref{sizef}. Let us compute $(u_1,v_1) = \phi\circ G\circ\phi^{-1}(u,v)$:
\begin{align*}
u_1 &= x_1f(y_1) = \left(x+1+\mu_1(y)+\rho_1(x,y)\right)f\left(y + \frac{1+\mu_2(y)}{x} + \frac{\rho_2(x,y)}{x}\right)\\
&= \left(x+1+\mu_1(y)+\rho_1(x,y)\right)\left(f(y) + f'(y)\frac{1+\mu_2(y)}{x} +  O\left(\frac{1}{x^{1+\epsilon}y^{1+t}}\right)\right)\\
%&=xf(y) + (1+\mu_1(x,y))f(y) + f'(y)(1+\mu_2(y)) + O\left( \frac{1}{x^{\epsilon}y^{1+t}},
%\frac{1}{x^ay^t},\frac{1}{xy^{1+t}}\right)\\
&=u+1+O\left(\frac{1}{u^{\epsilon}}\right)
\end{align*}
and
\begin{align*}
y_1 &= y + \frac{1+\mu_2(y)}{x} + \frac{\rho_2(x,y)}{xy}\\
&= y + \frac{1+\mu_2(y)}{u}- \frac{1+\mu_2(y)}{u} + \frac{1+\mu_2(y)}{x} + \frac{\rho_2(x,y)}{xy}\\
&= y + \frac{1+\mu_2(y)}{u}- \frac{1+\mu_2(y)}{u} + \frac{1+\mu_2(y)}{x} + \frac{\rho_2(x,y)}{xy}\\
&=y + \frac{1}{u} + O\left(\frac{1}{y^tu},\frac{1}{y^eu},\frac{1}{u^{1+\epsilon}}\right)
\end{align*}

Therefore, after conjugating $G$ we obtain $(u_1,y_1) = G_1(u,y) = \phi\circ G \circ \phi^{-1}(u,y)$:
\begin{align}
u_1 &= u+1+\kappa_1(u,y),\\
y_1 &= y + \frac{1}{u} + \kappa_2(u,y).
\end{align}
where $\displaystyle{\kappa_1(u,y) = O\left(\frac{1}{u^{\epsilon}}\right)}$ and $\displaystyle{\kappa_2(u,y) =  O\left(\frac{1}{y^tu},\frac{1}{y^eu},\frac{1}{u^{1+\epsilon}}\right)}$.
Let
\begin{align*}
g(u,y) = \int (1+\kappa_1(u,y))^{-1}du
\end{align*}
be any indefinite integral of $1/(1+\kappa_1)$ in the region $V'$. We can check easily that $\displaystyle{g(u,y) = u + O\left(\frac{1}{u^{\epsilon}}\right)}$. 
Define the change of variables as follows:
\begin{align*}
(s,y) = \phi_2(u,y) = (g(u,y),y)
\end{align*}
We have $\phi_2$ injective and we compute $(s_1,y_1) = \phi_2 \circ G_1 \circ \phi_2^{-1}(s,y)$.
\begin{align*}
s_1 &= g(u_1,y_1) = g(u,y) + (u_1-u)g_u(u,y) +(y_1-y)g_y(u,y) + O\left(g_uu,\frac{1}{u}\right)\\
&= s + \left(1+\kappa_1(u,y)\right)g_u(u,y) + O\left(\frac{1}{u}\right)\\
&= s + 1 + O\left(\frac{1}{u}\right).
\end{align*}
and
\begin{align*}
y_1 &= y + \frac{1}{s} - \frac{1}{s} + \frac{1}{u} + \kappa_2(u,y)\\
&=y +\frac{1}{s} + O\left(\frac{1}{y^\tau s},\frac{1}{s^{1+\epsilon}}\right).
\end{align*}
We can therefore write $(s_1,y_1) = G_2(s,y) =\phi_2 \circ G_1 \circ \phi_2^{-1}(s,y)$ as follows:
\begin{align}
s_1&= s + 1 + O\left(\frac{1}{s}\right)\\
y_1&=y +\frac{1}{s} + O\left(\frac{1}{y^\tau s},\frac{1}{s^{1+\epsilon}}\right). 
\end{align}
We repeat the same procedure to get $(t_1,y_1) = G_3(t,y) = \phi_3\circ G_2 \circ \phi_3^{-1}(t,y)$:
\begin{align}
t_1&= t + 1 + O\left(\frac{1}{t^2}\right)\\
y_1&=y +\frac{1}{t} + O\left(\frac{1}{y^\tau t},\frac{1}{t^{1+\epsilon}}\right). 
\end{align}
Let $\gamma(t,y) = t_1-t-1 = O\left(\frac{1}{t^2}\right)$. For any $(t_0,y_0) \in V''$, then we have $\sum_i\gamma(t_i,y_i)$ is bounded.
Therefore we can define the transformation $(z,y) = \phi_4(t,y) = (t + \sum_{i\geq 0}\gamma(t_i,y_i),y)$. We define $(z_1,y_1) = G_4(z,y) = \phi_4\circ G_3 \circ \phi_4^{-1}(s,y)$, so we obtain
\begin{align*}
(z_1,y_1) &= \phi_3(t_1,y_1) = (t_1 + \sum_{i\geq 1}\gamma(t_i,y_i),y_1)\\
&= (t+1+ \gamma(t,y) + \sum_{i\geq 1}\gamma(t_i,y_i),y_1)\\
&= (t+1+\sum_{i\geq 0}\gamma(t_i,y_i),y_1) = (z+1,y_1),
\end{align*}
where
\begin{align*}
y_1 = y + \frac{1}{t} + O\left(\frac{1}{y^\tau t},\frac{1}{t^{1+\epsilon}}\right)\\
= y+ \frac{1}{z} + O\left(\frac{1}{y^\tau z},\frac{1}{z^{1+\epsilon}}\right).\\
\end{align*}
Now we have the transformation $(z_1,y_1) = G_4(z,y)$:
\begin{align*}
z_1 &= z+1,\\
y_1 &= y+ \frac{1}{z} + \varpi(y,z),
\end{align*}
where $\displaystyle{\varpi(y,z) = O\left(\frac{1}{y^\tau z},\frac{1}{z^{1+\epsilon}}\right)}$. 
We need one more transformation in $y$. We need to separate some terms in $z\varpi(y,z) = O\left(1/y^\tau,1/z^{\epsilon}\right) = \Upsilon(y)+ \varpi_1(y,z)$ where $\Upsilon(y) = O(1/y^\tau)$ and $\varpi_1(y,z) = O(1/z^{\epsilon})$.
Define:
\begin{align*}
h(y) = \int(1+\Upsilon(y))^{-1}dy.
\end{align*}
Then:
\begin{align*}
h(y_1) &= h(y) + \left(\frac{1+\Upsilon(y)}{z} + O(1/z^{1+\epsilon})\right)h'(y) + O(1/z^2)\\
&= h(y) + \frac{1}{z} + \chi(z,y)
\end{align*}
where $\displaystyle{\chi(z,y) = O\left(\frac{1}{z^{1+\epsilon}}\right)}$.
If we write $(z,w) = G_5(z,y) = \phi_5 \circ G_4 \circ \phi_5^{-1}(z,y)$, for $\phi_5(z,y) = (z,h(y))$, then
\begin{align*}
z_1 &= z+1\\
w_1 &= w + \frac{1}{z} + \chi_1(z,w)
\end{align*}
where $\chi_1(z,w) = O(1/z^{1+\epsilon})$.

We notice that for $(z_0,w_0) \in V''$ then $\sum_{i\geq 0}\chi_1(z_i,w_i)$ is bounded. Therefore we can define:
$$
p(z_0,y_0) = \sum_{i\geq 0}\chi_1(z_i,w_i)
$$
and for the transformation $\phi_6(z,w) = (z,w+p(z,w)))$ we will have $(z_1,u_1) = G_6(z,u) = \phi_6 \circ G_5 \circ \phi_6^{-1}(z,u)$; we have:
\begin{align*}
z_1 = z+1,\qquad u_1 = u +\frac{1}{z}.
\end{align*}
in a region of the form $V$, as in \eqref{regionV} for appropriate $R, N, \theta$.
The following lemma concludes the proof of Lemma \ref{malem}.
\end{proof}

%%END OF MAIN LEMMA%%%%
\begin{lemma}
If $(x_1,y_1) = F(x,y)$ of the following form:
\begin{align*}
x_1 &= x + 1\\
y_1 &= y + \frac{1}{x}
\end{align*}
in a region of the type $V$ as above, then there exists a change of coordinates $\phi(x,y)$ that transforms the region $V$ to a region of the same type (with possibly larger $N$ and $R$)
and such that $(z_1,w_1) = \phi \circ F \circ\phi^{-1}(z,w) = (z+1,w)$.
\end{lemma}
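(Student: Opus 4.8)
The plan is to exploit the fact that the first coordinate of $F$ is already the translation $x \mapsto x+1$, so all that is needed is a second coordinate that is \emph{invariant} under $F$. Concretely, I would look for a function $g$, holomorphic on the $x$-sector of $V$, solving the difference equation
\[
g(x+1) - g(x) = \frac{1}{x},
\]
and then set $\phi(x,y) = (x,\, y - g(x))$. This $\phi$ is automatically injective, since it is a shear: from $(z,w)$ one recovers $x = z$ and $y = w + g(z)$, so $\phi^{-1}(z,w) = (z,\, w+g(z))$ and the Jacobian is unipotent. Granting such a $g$, the conjugacy is a one-line check:
\[
\phi \circ F \circ \phi^{-1}(z,w) = \phi\bigl(z+1,\; w + g(z) + \tfrac{1}{z}\bigr) = \bigl(z+1,\; w + g(z) + \tfrac1z - g(z+1)\bigr) = (z+1,w),
\]
using the difference equation in the last step. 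So the entire lemma reduces to producing $g$ with the right analytic properties.

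The crux is therefore the construction of $g$. Since $\log(x+1) - \log x = \log(1 + 1/x) = 1/x + O(1/x^2)$, I would write $g(x) = \log x + h(x)$, which forces $h$ to satisfy
\[
h(x+1) - h(x) = \frac{1}{x} - \log\Bigl(1 + \frac{1}{x}\Bigr) = O\Bigl(\frac{1}{x^2}\Bigr).
\]
Because the right-hand side is now summable along the integer-shift orbit, I can solve this explicitly by the telescoping series
\[
h(x) = -\sum_{k \geq 0} \left[\frac{1}{x+k} - \log\Bigl(1 + \frac{1}{x+k}\Bigr)\right].
\]
On the sector $\{\Ree(x) > R,\ |\Arg(x)| < \theta\}$ every point $x+k$ stays in the right half-plane, so each logarithm has an unambiguous principal branch, each summand is $O(1/(x+k)^2)$, the series converges uniformly on compacta, and $h$ is holomorphic with $h(x) = O(1/x)$. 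Hence $g(x) = \log x + O(1/x)$, holomorphic on the sector. (Equivalently, one may simply take $g = \psi$, the digamma function, which satisfies both $\psi(x+1) - \psi(x) = 1/x$ and $\psi(x) = \log x + O(1/x)$.)

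It then remains to track the effect of $\phi$ on $V$, and this is the step I expect to require the most care. Since $z = x$, the conditions $\Ree(z) > R$ and $|\Arg(z)| < \theta$ transfer verbatim, so the first coordinate is already of the required type; the content is in the second coordinate, where the image is governed by $\Ree(w + g(z)) > R$ and $|w + g(z)|^N < |z|$. The subtlety is that $g$ grows like $\log z$, so the invariant $w = y - g(x)$ is \emph{not} uniformly bounded below in real part over all of $V$, and $\phi(V)$ is a logarithmic-graph neighborhood rather than a naive product half-space. The key observation that saves the statement is that one need only exhibit a region of the same type \emph{inside} $\phi(V)$: I would verify that for $R'$ large and $N' > N$ large, every $(z,w)$ with $\Ree(z) > R'$, $|\Arg(z)| < \theta$, $\Ree(w) > R'$, $|w|^{N'} < |z|$ satisfies $(z, w+g(z)) \in V$, because $\Ree(g(z)) = \log|z| + O(1/z) > 0$ and $|w + g(z)| \le |z|^{1/N'} + \log|z| + O(1/|z|)$, whose $N$-th power is $o(|z|)$ once $N'>N$. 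The estimates involved are of exactly the elementary $\Ree$/$\Arg$/modulus kind already performed in the Main Lemma. On this region of the same type the map is the pure translation $(z,w)\mapsto(z+1,w)$, and pulling back by $\phi$ gives the claimed forward-invariant basin, completing the proof.
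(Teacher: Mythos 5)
Your proposal is correct and is essentially the paper's own argument: the paper first conjugates by the shear $(x,y)\mapsto(x,y-\ln x)$, leaving the error $\psi(x)=\tfrac1x-\ln(1+\tfrac1x)=O(1/x^2)$, and then removes it with the orbit sum $\sum_{i\ge 0}\psi(x_i)=\sum_{i\ge 0}\psi(x+i)$, which is exactly your telescoping series for $h$ (since $x_i=x+i$), so your single shear by $g=\log+h$ is just the composition of the paper's two steps. Your tracking of the region is, if anything, more explicit than the paper's brief remark that the condition $|y|^N<|x|$ becomes $|w|^{2N}<|x|$.
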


\begin{proof}
Use the change of coordinates $(z,u) = (x,y-\ln(x))$ where we choose a branch of the logarithm. Then we can compute:
\begin{align*}
u_1 &= y_1 - \ln(z_1) = y + \frac{1}{x} - \ln(x+1) = u + \ln(x) + \frac{1}{x} -\ln(x+1)\\
&= u + \psi(x)
\end{align*}
where $\psi(x) = O(1/x^2)$. Then we can do the usual change: $w_0 = u_0 + \sum_{i\geq 0}(\psi(x_i))$ and we obtain $w_1 = u_1 + \sum_{i\geq 1}(\psi(x_i)) = u_0 + \psi(x_0) + \sum_{i\geq 1}(\psi(x_i)) = u_0 + \sum_{i\geq 0}(\psi(x_i)) = w_0$.
So, we obtain the desired conjugacy.
Note that the condition was $|y|^N<|x|$ and this translates into $|w|^{2N} < |x|$, since for large $N$ and $R$ we have $|\ln(x)| < |x|^{1/N}$ for any $N$.
\end{proof}

%\newpage
%%%%%%%%% FUCHSIAN %%%%%%%%%
\section{\textbf{Fuchsian Singularities}}

In this section we will prove Theorem \ref{Fuchsian}. The strategy is similar to the one above. We divide in different cases, change coordinates and then apply the lemmas proven in the last section.

\begin{proof}[Proof of Theorem \ref{Fuchsian}]:
We have two cases: either $m=0$ (in which case we have a non-degenerate characteristic direction) or $m>0$ (degenerate characteristic direction).

\subsection{\textbf{Case (a.1): m$=$0, n$=$1}}

\begin{align*} 
\left\{ \begin{array}{l} \tilde{F}_1(z,u) = z + z^{k}[a_0 + O(z,u)]  \\ \tilde{F}_2(z,u) = u + z^{k-1}[c_{1}u + O(z,u^2)] 
\end{array}\right. 
\end{align*}
and we know:
\begin{itemize}
\item $a_0 \neq 0$ and $c_1 \neq 0$ 
and							
\item $\Ind(\tilde{F},\PP^1,[v]) = a_0/c_1$
\end{itemize}
where $c_1 = b_1 - a_0$.

\begin{proposition} Let $\tilde{F}$ be as above. Then if $\Ree(\Ind(\tilde{F},\PP^1,[v]))>0$ there exists an open basin for $\tilde{F}$.
\end{proposition}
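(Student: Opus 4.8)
The plan is to reduce this statement to Lemma 1, which is exactly the $b=0$ instance of Hakim's theorem. First I would read off the leading data of $\tilde{F}$: the $z$-component is $\tilde{F}_1 = z + z^k[a_0 + O(z,u)]$ and the $u$-component is $\tilde{F}_2 = u + z^{k-1}[c_1 u + O(z,u^2)]$, so the natural identification with the template of Lemma 1 is $a = k-1$, $b = 0$, $c = a_0$, $d = c_1$ (with $u$ playing the role of $w$); note $a+b = k-1 \geq 1$. With $b = 0$ the two inequalities of Lemma 1 collapse to the single condition $\Ree(c/d) > 0$, and since $c/d = a_0/c_1 = \Ind(\tilde{F},\PP^1,[v])$, this is precisely the hypothesis $\Ree(\Ind(\tilde{F},\PP^1,[v])) > 0$. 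Thus, once the form is matched, Lemma 1 produces the basin directly (indeed with conjugacy to the translation $(x,y)\mapsto(x+1,y)$).

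The genuine subtlety, and the step I expect to be the main obstacle, is that $\tilde{F}$ does not literally have the shape required by Lemma 1: the bracket $[c_1 u + O(z,u^2)]$ in $\tilde{F}_2$ contains pure powers of $z$ (the term $z^k q_{k+1}(1,0)$ and its successors), so $u$ does not divide $\tilde{F}_2 - u$ and the line $\{u=0\}$ fails to be invariant. To see that these terms are harmless, I would straighten the parabolic direction by $x = -1/((k-1)a_0 z^{k-1})$, which turns the $z$-dynamics into $x_1 = x + 1 + O(x^{-\gamma})$ and the $u$-dynamics into a linear recurrence $u_1 = u(1 - \alpha/x + \cdots) + (\text{forcing of size } |x|^{-k/(k-1)})$ with $\alpha = c_1/((k-1)a_0) = 1/((k-1)\Ind(\tilde{F},\PP^1,[v]))$. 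Along orbits with $x_j \approx j$ the multiplicative factor $\prod_j(1-\alpha/x_j)$ decays like $|x|^{-\Ree\alpha}$, so the homogeneous part drives $u \to 0$ exactly when $\Ree\alpha > 0$; since $z\mapsto 1/z$ preserves the right half-plane, this is equivalent to $\Ree(\Ind(\tilde{F},\PP^1,[v])) > 0$. The forcing is summable of order $\sum_j j^{-k/(k-1)}$ and only contributes a particular solution decaying like $j^{-1/(k-1)}$, so the pure-$z$ terms do not obstruct attraction.

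To make this rigorous while still quoting Lemma 1, I would remove the offending pure-$z$ terms by straightening the invariant parabolic curve tangent to $[v]$ — which exists here since $[v]$ is non-degenerate (Theorem 3) — to the line $\{u=0\}$. After this change of coordinates $\{u=0\}$ is invariant, $u$ divides $\tilde{F}_2 - u$, the leading coefficients $a_0$ and $c_1$ are unchanged, and $\tilde{F}$ acquires exactly the form of Lemma 1 with the parameters above; Lemma 1 then yields the basin. Alternatively, one can bypass Lemma 1 altogether and argue directly from the recurrence by exhibiting an invariant region $V = \{\Ree(x) > R,\ |\Arg(x)| < \theta,\ |u| < \delta\}$ and running the estimates of the previous section to verify invariance and convergence to the origin along $[v]$. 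Either way the mathematical content is Hakim's theorem transported to the blow-up chart, and the real work lies entirely in controlling the error and forcing terms, not in the index bookkeeping.
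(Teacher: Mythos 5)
Your index bookkeeping is right, and the hypothesis-matching with Lemma 1 (with $b=0$, $a=k-1$, $c=a_0$, $d=c_1$, so that the conditions collapse to $\Ree(a_0/c_1)=\Ree(\Ind(\tilde F,\PP^1,[v]))>0$) is also right. But the route is both more complicated than the paper's and, in its primary form, has a real gap. The paper's entire proof is a citation: case (a.1) is precisely the non-degenerate case, Section 2 already established $\Ind(\tilde F,\PP^1,[v])=1/i_H(F,[v])$, and since $\zeta\mapsto 1/\zeta$ preserves the open right half-plane, the hypothesis $\Ree(\Ind)>0$ is literally Hakim's hypothesis $\Ree(i_H(F,[v]))>0$ for the germ $F$ \emph{downstairs}. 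Theorem 4 (Hakim) then gives the open basin for $F$ along $[v]$, which corresponds under the blow-up (a biholomorphism off the exceptional divisor) to a basin for $\tilde F$ at the fixed point $u=0$. There is nothing to match up: the pure-$z$ terms in $\tilde F_2$, which you identify as ``the main obstacle,'' are an artifact of insisting on applying a germ-form lemma in the blow-up chart; they never enter if one applies Hakim's theorem to $F$ rather than to $\tilde F$.

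The gap in your primary repair is the straightening step. The parabolic curves furnished by Theorem 3 are \emph{sectorial} objects: injective maps $\phi:\Delta\to\CC^2$ with the origin on the boundary of the image, holomorphic on $\Delta$ only, and in general not extendable to an analytic invariant curve through the origin (tangent-to-identity germs generically admit no such curve, since the Hakim--\'Ecalle curves are asymptotic to divergent formal series). Consequently the change of variables $(z,u)\mapsto(z,u-\psi(z))$ is defined only on a sector times a disk, and the conjugated map is not a germ at the origin; its ``error terms'' are only sectorially holomorphic. Lemma 1, however, is a statement about germs, and its proof in the paper (the substitution $u=z^aw^b$ and Hakim's theorem in $\CC^3$) genuinely uses the germ structure, so you cannot feed it this sectorial data. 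To repair this you would have to prove a sectorial version of Lemma 1, which amounts to redoing Hakim's estimates --- i.e., your fallback argument. That fallback (invariant region $\{\Ree(x)>R,\ |\Arg(x)|<\theta,\ |u|<\delta\}$ plus the product estimate $\prod_j(1-\alpha/x_j)\sim n^{-\alpha}$) is sound in outline, but it is a re-proof of the very theorem being cited, and note the mild circularity of the primary route: both the curve you straighten and the $b=0$ case of Lemma 1 are themselves Hakim's results, while Hakim's theorem applied downstairs settles the proposition in one line.
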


\begin{proof}
This has already been proved by Hakim \cite{Hak}. 
\end{proof}

\subsection{\textbf{Case (a.2): m$>$0, n$=$m+1}}

\begin{align*} 
\left\{ \begin{array}{l} \tilde{F}_1(z,u) = z + z^{k}[a_mu^m + O(z,u^{m+1})]  \\ \tilde{F}_2(z,u) = u + z^{k-1}[c_{m+1}u^{m+1} + O(z,u^{m+2})] 
\end{array}\right. 
\end{align*}

\begin{proposition}
Given $\tilde{F}$ as above. If $\Ind(\tilde(F),\PP^1,[v]) = \zeta \in R$, where 
$$
R = \left\{\zeta\in\CC : \Ree(\zeta) > -\frac{m}{k-1}, \left|\zeta-\frac{m+1+m/(k-1)}{2}\right| > \frac{m+1+m/(k-1)}{2}\right\}
$$
then $\tilde{F}$ has a basin attracted to the origin.
\end{proposition}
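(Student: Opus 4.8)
The plan is to observe that, up to higher-order terms, $\tilde F$ already has the shape demanded by Lemma 1. Comparing $\tilde F_1 = z + z^k[a_mu^m + \cdots]$ and $\tilde F_2 = u + z^{k-1}[c_{m+1}u^{m+1} + \cdots]$ with the template $f_1 = z + z^{a+1}w^b[c + O(z,w)]$, $f_2 = w + z^aw^{b+1}[d + O(z,w)]$, I would set $a = k-1$, $b = m$, $c = a_m$, $d = c_{m+1}$, so that $c/d = a_m/c_{m+1} = \Ind(\tilde F,\PP^1,[v]) =: \zeta$. With these identifications Lemma 1's hypotheses $\Ree(c/d) > -b/a$ and $|c/d + b/(2a)| > b/(2a)$ become conditions on $\zeta$ of exactly the circle-and-halfplane type defining $R$, and its conclusion (a basin on which the map is conjugate to $(x,y)\mapsto(x+1,y)$) is precisely what is claimed. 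So the skeleton is: reduce $\tilde F$ to the normal form of Lemma 1 and quote Lemma 1, which also delivers the conjugacy to a translation needed later.

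The difficulty is that $\tilde F$ is not literally of that form. The remainders $O(z,u^{m+1})$ and $O(z,u^{m+2})$ originate from the higher homogeneous parts $P_{k+1},P_{k+2},\dots$ of $F$ and contain monomials $z^{k+j}u^i$ with $i<m$ (respectively $i\le m$) that are not divisible by $z^ku^m$ (respectively $z^{k-1}u^{m+1}$). Geometrically these record that the strict transform $\{u=0\}$ fails to be $\tilde F$-invariant: since $r(0)=0$ one has $\tilde F_2(z,0) = O(z^k)$, a nonzero drift carrying orbits off the characteristic direction. Removing, or at least dominating, this drift is the real content of the proof.

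To carry this out I would pass to the $\CC^3$ picture of Lemma 1 by introducing $U = z^{k-1}u^m$, so that $U \mapsto U + \lambda U^2 + \cdots$ with $\lambda = (k-1)a_m + mc_{m+1}$, while $z \mapsto z(1 + a_mU + \cdots)$ and $u \mapsto u(1 + c_{m+1}U + \cdots)$; this is a nondegenerate characteristic direction to which Hakim's theorem applies, and the plain decay conditions $\Ree(a_m/\lambda)>0$, $\Ree(c_{m+1}/\lambda)>0$ would give only the smaller exclusion disk with real diameter $[-m/(k-1),0]$. The drift appears as an inhomogeneous forcing of size $\sim z^k$ in the $u$-equation, so I would confine the orbit to a domain of the type used in the Main Lemma (roughly, where $|z|$ is small compared with $|u|^{m+1}$, so the forcing is of strictly lower order than the main term $c_{m+1}z^{k-1}u^{m+1}$). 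Showing that such a domain is invariant and that the forcing does not spoil $u\to 0$ is exactly where the index must lie in $R$: controlling the drift strengthens the $a_m/\lambda$ condition and enlarges the excluded disk to diameter $[-m/(k-1),m+1]$, which is $R$.

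The main obstacle is therefore not the conjugacy itself, which Lemma 1 (via Hakim) supplies once the map is in normal form, but the bookkeeping around the drift: constructing an invariant domain on which the non-invariance of $\{u=0\}$ is asymptotically negligible, and reading off from the invariance estimates the sharp region $R$ rather than the naive larger region. Once the offending monomials are absorbed into the $O(z,w)$ remainders of the normal form, a direct appeal to Lemma 1 finishes the argument.
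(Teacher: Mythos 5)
Your diagnosis of the obstruction is exactly right --- the remainders contain monomials ($\sim z^{k+1}$ in the $z$-equation, $\sim z^k$ in the $u$-equation) not divisible by the leading ones, reflecting the non-invariance of $\{u=0\}$ --- and even your predicted answer, the excluded disk of real diameter $[-m/(k-1),\,m+1]$, is the correct one. But the execution has two genuine gaps. First, the step ``pass to $\CC^3$ via $U=z^{k-1}u^m$ and apply Hakim'' is not available once the drift is present: the whole point of the divisibility hypothesis in Lemma 1 is that it makes the $\CC^3$ extension well defined and skew-product-like, $u_1=u\bigl(1+c_{m+1}U+U\,O(z,u)\bigr)$, which is exactly what fails here, since the drift contributes $\gamma z^k$ to $u_1$ and $z^k/u$ is neither holomorphic nor bounded near the origin in $(U,z,u)$. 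Moreover a black-box appeal to Hakim along $(1,0,0)$ would be useless anyway: it yields points attracted tangentially to that direction, i.e.\ with $|(z,u)|\lesssim|U|$, while on the hypersurface $\{U=z^{k-1}u^m\}$ one has $|U|\le\max(|z|,|u|)^{k-1+m}\ll|(z,u)|$, so no such points exist near the origin; Lemma 1 survives this only because the divisible error structure produces a fat basin of the form $\{U\in\mathrm{petal},\ |z|,|u|<\epsilon\}$, and the drift destroys precisely that structure. Second, and decisively, the passage from ``domain invariance'' to the region $R$ is asserted rather than derived: you never identify the quantity whose sign governs the invariance of $\{|z/u^{m+1}|<\epsilon\}$, namely $\Ree\bigl((a_m-(m+1)c_{m+1})/\lambda\bigr)$, i.e.\ the shifted ratio $\zeta-(m+1)$, and without it the disk through $m+1$ cannot appear. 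Note also that if your opening identification $c=a_m$, $d=c_{m+1}$ could be pushed through, Lemma 1 would give the strictly larger region with excluded disk $[-m/(k-1),0]$ only --- a sign that the ``absorption into the remainders'' cannot be done while keeping those constants.

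The paper closes both gaps with the one move your plan circles around but never makes: a two-dimensional change of variables \emph{before} invoking any lemma, $(x,u)=\bigl(z/u^{m+1},u\bigr)$. Since $z^k=\bigl(x^{k-1}u^{mk+k}\bigr)x$ and $z^{k+1}=\bigl(x^ku^{mk+k-1}\bigr)xu^{m+2}$, every drift monomial becomes divisible by the new leading monomials, and $\tilde F$ takes exactly the form of Lemma 1 with exponents $a=k-1$, $b=mk+k-1$ and \emph{new} coefficients $c=1-(m+1)\beta$, $d=\beta$, where $\beta=1/\zeta$. The ratio fed into Lemma 1 is therefore $c/d=\zeta-(m+1)$ --- the substitution that absorbs the drift is the same one that produces the shift by $m+1$ you postulated --- and the hypotheses $\Ree(c/d)>-b/a$ and $\bigl|c/d+b/(2a)\bigr|>b/(2a)$ unwind to $\Ree(\zeta)>-m/(k-1)$ together with the exterior of the disk of real diameter $[-m/(k-1),m+1]$, i.e.\ the region $R$; Lemma 1's conclusion also hands over the conjugacy to the translation needed later. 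So the correct completion of your proposal is not ``absorb the monomials and quote Lemma 1 with $c=a_m$, $d=c_{m+1}$,'' but this coordinate change followed by Lemma 1 with the shifted constants.
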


\begin{proof}
We use the change of variables: 
\[ x = \frac{z}{u^{m+1}} \qquad u = u.\]

In these coordinates we obtain
\[ 
\left\{ \begin{array}{l} x_1 = x + x^ku^{mk+k-1}[(1-(m+1)\beta) + O(x,u)]\\
u_1 = u + x^{k-1}u^{mk+k}[\beta + O(x,u)] 
\end{array}\right. 
\]
where $\beta=\frac{1}{\Ind(\tilde{F},\PP^1,[v])}$
We apply lemma 1, which was proved in the last section.

\begin{lemma}
Let $F=(f_1,f_2)$ where
\[ 
\left\{ \begin{array}{l} f_1(z,w) = z + z^{a+1}w^b[c + O(z,w)]  \\ f_2(z,w) = w + z^aw^{b+1}[d +O(z,w)] 
\end{array}\right. 
\]
If $c/d$ is such that:
\[
\Ree(c/d) > -\frac{b}{a} \qquad \textrm{and} \qquad \left|\frac{c}{d}+\frac{b}{2a}\right| > \frac{b}{2a}
\]
then the map has a basin attracted to the origin. In this basin the map is conjugated to a translation $(x,y) \to (x+1,y)$.
\end{lemma}

In our case,
$$
a = k-1, b = mk+k-1, c = 1 - (m+1)\beta, d = \beta,
$$
and therefore:
\begin{eqnarray*}
\Ree\left(\frac{1}{\beta} - (m+1)\right)> - \frac{mk+k-1}{k-1}\textrm{  and  } \left|\frac{1}{\beta} - (m+1)+\frac{mk+k-1}{2(k-1)}\right| > \frac{mk+k-1}{2(k-1)}.
\end{eqnarray*}
This in turn becomes
\begin{eqnarray*}
\Ree\left(\frac{1}{\beta}\right)> -\frac{m}{k-1},
\end{eqnarray*}
and
\begin{eqnarray*}
\left|\frac{1}{\beta} - (m+1)+\frac{mk+k-1}{2(k-1)}\right| > \frac{mk+k-1}{2(k-1)},
\end{eqnarray*}
which is exactly the region:
$$\left\{\zeta \in \CC, \Ree(\zeta) > -\frac{m}{k-1}, \left|\zeta-\frac{m+1+m/(k-1)}{2}\right| > \frac{m+1+m/(k-1)}{2}\right\}.
$$
\end{proof}
We can say a little more about other regions in $\CC$ for which there will be a basin also. Recall the expression of $\tilde{F}$:
\begin{align*} 
\left\{ \begin{array}{l} \tilde{F}_1(z,u) = z + z^{k}[u^m + O(z,u^{m+1})]  \\ \tilde{F}_2(z,u) = u + z^{k-1}[\beta u^{m+1} + \rho z + O(z^2,zu,u^{m+2})] 
\end{array}\right. 
\end{align*}
We say $\tilde{F}$ is \textit{regular} if $\rho \neq 0$ (following Molino's terminology \cite{Mo}).
\begin{itemize}
\item[A.] $\tilde{F}$ is regular.
We change variables:
$$
(t,u) = \phi(z,u) = \left(\frac{z}{u^m},u\right)
$$
and we get $(t_1,u_1) = G(t,u) = \phi\circ\tilde{F}\circ\phi^{-1} (t,u)$:

\[ 
\left\{ \begin{array}{l} t_1 = t + t^{k-1}u^{mk-1}[ - m\rho t^2  + (1-m\beta)tu + O(t^2u,tu^2)]
\\ u_1 = u + t^{k-1}u^{mk-1}[\rho tu + \beta u^2 + O(tu^2,u^3)]
\end{array}\right. 
\]
Then we have one non degenerate characteristic direction: 
$$(1-(m+1)\beta,(m+1)\rho).$$
The Hakim index for this non degenerate characteristic direction is
$$
-(m+1)\left(1-(m+1)\beta\right).
$$
Using Hakim's theorem we know that if $\Ree\left(-(m+1)\left(1-(m+1)\beta\right)\right)>0$ we have a basin. Unraveling, we obtain:
$$
\Ree(\beta) = \Ree\left(\frac{1}{\Ind(\tilde{F},\PP^1,[v])}\right) > \frac{1}{m+1},
$$
will guarantee the existence of a basin.
Therefore $\Ind(\tilde{F},\PP^1,[v])$ is in the region:
$$
R_1 = \left\{\left|\zeta - \frac{1}{2(m+1)}\right| < \frac{1}{2(m+1)} \right\}.
$$
\\
\item[B.] $\tilde{F}$ is not regular.
Then $\rho = 0$ and in the same change of variables we obtain:
\[ 
\left\{ \begin{array}{l} t_1 = t + t^{k}u^{mk}[(1-m\beta) + O(t,u)]
\\ u_1 = u + t^{k-1}u^{mk+1}[\beta + O(t,u)]
\end{array}\right. 
\]
We can apply Lemma 1 again, and therefore we get: if $\Ind(\tilde{F},\PP^1,[v])$ is in the following region
$$
R_2=\left\{\zeta \in \CC, \Ree(\zeta) > -\frac{m}{k-1}, \left|\zeta-\frac{m+m/(k-1)}{2}\right| > \frac{m+m/(k-1)}{2}\right\}
$$
then we do have a basin. 
\end{itemize}

Let us summarize. We do have a basin for $\Ind(\tilde{F},\PP^1,[v]) \in R \cup S$ where $S = R_1 \cap R_2$. If $\tilde{F}$ is regular then we have a basin for $\Ind(\tilde{F},\PP^1,[v]) \in R \cup R_1$ and if $\tilde{F}$ is not regular we have a basin for $\Ind(\tilde{F},\PP^1,[v]) \in R \cup R_2$.

\begin{center}
% Generated with LaTeXDraw 2.0.7
% Tue Apr 05 18:10:54 EDT 2011
% \usepackage[usenames,dvipsnames]{pstricks}
% \usepackage{epsfig}
% \usepackage{pst-grad} % For gradients
% \usepackage{pst-plot} % For axes
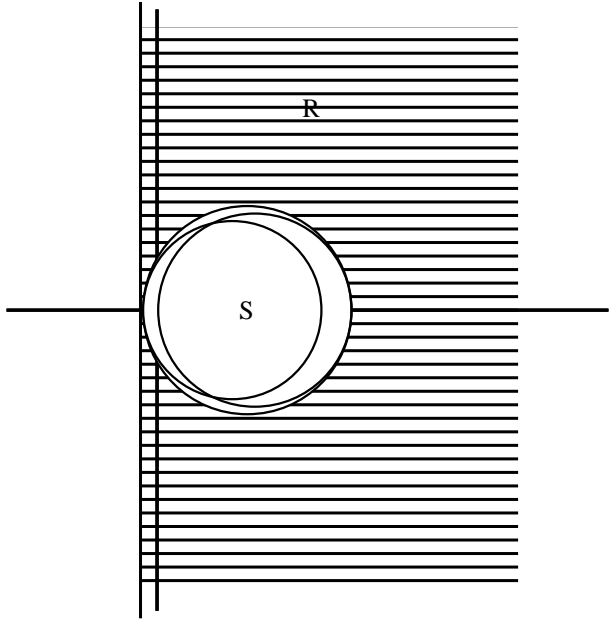
\begin{figure}[h]
%\scalebox{1} % Change this value to rescale the drawing.
{
\begin{pspicture}(0,-4.12)(8.0,4.12)
\psframe[linewidth=0.02,linecolor=white,dimen=outer,fillstyle=hlines*,hatchwidth=0.04,hatchangle=0.0](6.82,3.78)(1.78,-3.78)
\rput(2.0,0.0){\psaxes[linewidth=0.04,labels=none,ticks=none,ticksize=0.10583333cm](0,0)(-2,-4)(6,4)}
\pscircle[linewidth=0.03,dimen=outer,fillstyle=solid](3.2,0.0){1.4}
\pscircle[linewidth=0.03,dimen=outer,fillstyle=solid](3.0,0.0){1.2}
\pscircle[linewidth=0.03,dimen=outer](3.3,0.0){1.3}
\usefont{T1}{ptm}{m}{n}
\rput(3.19,0.0050){S}
\usefont{T1}{ptm}{m}{n}
\rput(4.05,2.685){R}
\psline[linewidth=0.04cm](1.78,4.1)(1.78,-4.1)
\end{pspicture} 
}
\caption{Region $R$ and $S$.}
\end{figure}
\end{center}

\end{proof}

\begin{remark}
When $m=0$ the region $R \cup S$  is the whole right plane (minus the circle around $S$), which is Hakim's result.
\end{remark}
%%%%%%%%% IRREGULAR %%%%%%%%%

\section{\textbf{Irregular characteristic directions}}

Here we prove Theorem \ref{irregular}. We divide it into the cases above and apply a change of variables. After that we apply the lemmas proven above.
 
\begin{proof}[Proof of Theorem \ref{irregular}]:
We will prove that there is a basin for $\tilde{F}$ and therefore for $F$.
Recall \eqref{blowF}:
\begin{align*}
z_1 &= z + z^k[p_k(1,u) + O(z)]\\
u_1 &= u + z^{k-1}[r(u) + O(z)]
\end{align*}

We divide in two cases: $m=0$ (as in Case (b.1)) and $m>0$ (as in Case (b.2)).

\subsection{\textbf{Case (b.1): m=0, n$>$1}}
Then we have:
\begin{align*}
z_1 &= z + z^k[a_0 + O(z,u)]\\
u_1 &= u + z^{k-1}[c_nu^n + O(z,u^{n+1})]
\end{align*}
with $a_0 \neq 0$ and $c_n  = b_n - a_{n-1}\neq 0$. Using a linear change of coordinates for $z$ we can assume $a_0 = -1$, and similarly for $u$ we assume $c_n = -1$.

Use the transformation
\[ 
x = \frac{1}{(k-1)z^{k-1}}, \qquad y = \frac{k-1}{(n-1)u^{n-1}}
\]
on a suitable open set, with the origin on its boundary. 

In these coordinates we have:
\begin{eqnarray}
x_1 &=& x + 1 + O\left(\frac{1}{x^{1/(k-1)}},\frac{1}{y^{1/(n-1)}}\right)\\
y_1 &=& y + \frac{1}{x} + O\left(\frac{y^{1/(n-1)}}{x^{k/(k-1)}},\frac{1}{xy^{n/(n-1)}}\right)
\end{eqnarray}	

We will show how we got the expression for $y_1$ (the expression for $x_1$ is immediate). 
\begin{eqnarray*}
u_1 &=& u + z^{k-1}(-u^n +O(z,u^{n+1}))\\
u_1^{n-1} &=& \left(u + z^{k-1}[-u^n +O(z,u^{n+1})]\right)^{n-1}\\
u_1^{n-1} &=& u^{n-1}\left(1 + \frac{z^{k-1}}{u}\left[-u^n +O(z,u^{n+1})\right]\right)^{n-1} \\
\frac{1}{y_1} &=& \frac{1}{y}\left(1 - (n-1)z^{k-1}u^{n-1} + O\left(\frac{z^k}{u},z^{k-1}u^n\right)\right)\\
y_1 &=& y\left(1 + (n-1)z^{k-1}u^{n-1} + O\left(\frac{z^k}{u},z^{k-1}u^n\right)\right)\\
y_1 &=& y\left(1 + (n-1)\frac{1}{(k-1)x}u^{n-1} + O\left(\frac{z^k}{u},z^{k-1}u^n\right)\right)\\
y_1 &=& y + \frac{1}{x} + O\left(\frac{y^{1/(n-1)}}{x^{k/(k-1)}},\frac{1}{xy^{n/(n-1)}}\right)
\end{eqnarray*}

We now apply Lemma \ref{malem}, which concludes the proof in this case.

\subsection{\textbf{Case (b.2): m$>$0, n$>$m+1}}

We have $a_{m} \neq 0$ and $a_{i} = 0$ for all $i<m$ and the analogous for $c_{j}$. 

Without loss of generality we assume $a_{m}=-1$ and $c_{n}=-1$. 

\begin{align*}
z_1 &= z + z^k[-u^m + O(z,u^{m+1})]\\
u_1 &= u + z^{k-1}[-u^n + O(z,u^{n+1})]
\end{align*}

We use the following change of coordinates:
\[ 
x = \frac{k-1}{z^{k-1}u^m}, \qquad y = \frac{(k-1)(n-m-1)}{u^{n-m-1}}
\]
In these coordinates we have:
\begin{eqnarray}
x_1 &=& x + 1  + O\left(\frac{1}{y^{1/(n-m-1)}},\frac{y^{\frac{km+k-1}{(k-1)(n-m-1)}}}{x^{1/(k-1)}}\right)\\
y_1 &=& y + \frac{1}{x} + O\left(\frac{y^{\frac{n-m+k-2+mk}{(n-m-1)(k-1)}}}{x^{k/(k-1)}},\frac{1}{xy^{1/(n-m-1)}}\right)
\end{eqnarray}
where the powers are chosen as a branch on a suitable open set.

And once again we apply Lemma \ref{malem}.
\end{proof}

\begin{remark}
In her paper \cite{Mo}, Molino proves that $(1,\alpha)$ is a non-degenerate characteristic direction for the map:
\begin{align*}
z_1 &= z + z^k[-1+ O(z,u)]\\
u_1 &= u + z^{k-1}[c_nu^n - \alpha z +  O(z^2,u^{n+1})],
\end{align*}
for the first case and also proves there exists a non-degenerate characteristic direction $(1,\frac{a_m}{(m+1)\alpha})$ for the map:
\begin{align*}
z_1 &= z + z^k[-u^m + O(z,u^{m+1})]\\
u_1 &= u + z^{k-1}[\alpha z -u^n + O(z^2,u^{n+1})]
\end{align*}
for the second case.
Therefore this proves that there exists a parabolic curve. Nonetheless there is no basin associated to these characteristic directions. It is an easy computation to show that the Hakim index associated to both is negative, which means that there is no basin along that direction.  
\end{remark}

%\newpage

%%%%%%%%%% APPARENT %%%%%%%%%

%\section{\textbf{Apparent Singularities}}

%\subsection{\textbf{Case (2.3): $1<m<\infty$, n$\leq$m}}

%\subsection{\textbf{Case (3): m$=\infty$}}

\section{Basins as Fatou-Bieberbach domains}

Given an automorphism of $\CC^2$ with a fixed point (say the origin) and attractive (i.e. $dF(0)$ has only eigenvalues with modulus less than $1$) is a well-known fact that the basin associated to the fixed point, is biholomorphic to $\CC^2$ (therefore a so-called Fatou-Bieberbach domain).

If the automorphism is tangent to the identity, Hakim proved that the basin associated to the non-degenerate characteristic directions are also biholomorphic to $\CC^2$. 

We will prove in this section that, if the map tangent to the identity in Theorems $1$ and $2$ is an automorphism of $\CC^2$ then the basins are biholomorphic to $\CC^2$.

Proving that a basin is biholomorphic to $\CC^2$ is in some sense a local statement. If we find a region $V$ such that the map is conjugated  in $V$ to a translation
$\phi \circ F \circ \phi^{-1}(z,w) = (z+1,w)$
for $(z,w) \in W = \phi(V)$, then we can define a map from the entire basin $\Omega = \bigcup_{i \geq 0}F^{-i}(V)$ to $\CC^2$ as follows:
\begin{align*}
\Phi: &  \Omega \to \CC^2\\
\Phi(p)  =& \phi \circ F^n(p) - (n,0)
\end{align*}
for any $n$ such that $F^n(p) \in V$. It is standard to see that this map is well defined and independent of $n$.
Then we can easily check that $\Phi$ is injective, and therefore is a biholomorphism between $\Omega$ and its image $\Phi(\Omega)$.

Clearly $\Phi(\Omega) = \bigcup_{n \geq 0} W - (n,0)$. Therefore, to prove that $\Omega$ is biholomorphic to $\CC^2$ we have to prove that $\bigcup_{n \geq 0} W - (n,0)$ is all of $\CC^2$.

Recall now that our region $V'$, before the last change of coordinates, is of the form $V' = V_{R,N,\theta} = \{(x,y) \in \CC^2: \Ree(x) > R, |\Arg(x)| <\theta, \Ree(y) > R, |y|^N < |x|\}$ for some $R,N,\theta$ in the Lemma $3$. 

We then change coordinates as $(z,u) = (x,y-\ln (x)) = \psi(x,y)$, and our region $V'$ becomes $W = \psi(V') = \{(x,y) \in \CC^2: \Ree(x) > R, |\Arg(x)| <\theta, y \in \CC\}$. We clearly have $\bigcup_{n \geq 0} W - (n,0) = \CC^2$.

All the basins we encounter in the Theorems $1$ and $2$ are therefore biholomorphic to $\CC^2$, since all of them were, either conjugate to the translation in a region as above, or came from non-degenerate characteristic directions. In the latter case due to Hakim, we already have that they are biholomorphic to $\CC^2$.

%%%%%% ANOTHER LEMMA %%%%%% 
%
%\begin{lemma}\label{malemfu}
%If $(x_1,y_1) = G(x,y)$ is defined as:
%\begin{align*}
%x_1 &= x + 1  + \eta_1(x,y)\\
%y_1 &= y\left[1 + \frac{c}{x} + \eta_2(x,y)\right]
%\end{align*}
%such that $\Ree(c)>0$ and
%$$\eta_1(x,y) = O\left(\frac{1}{x^{1/k-1}},\frac{1}{y}\right), a>0,b>0\textrm{ and }\eta_2(x,y) =O\left(\frac{1}{x^{k/k-1}},\frac{1}{xy},\frac{y}{x^{(k+1)/(k-1)}}\right) c>0, d>1, e>0.
%$$ 
%Then there exists a domain $V$ in $\CC^2$ and an injective holomorphic map $\phi:V\to \CC^2$ such that:
%\begin{itemize}
%\item[(1)] $(x,y) \in V$ then $G(x,y) \in V$; and 
%\item[(2)] $\phi\circ G\circ\phi^{-1}(u,v) = (u+1,v)$.
%\end{itemize}
%\end{lemma}

\bibliography{biblio}

\end{document}